\def\Op#1{ \text{Op}\left( #1 \right)}
\def\text#1{\mbox{#1}}
\def\Sym#1{ \text{Sym}\left( #1 \right)}
\newcommand{\Order}[1]{ \mathcal{O} \left( #1 \right) }
\def\SS{\mathcal{S}}
\newtheorem{theorem}{Theorem}%[section]
\newtheorem{lemma}{Lemma}%[section]
\providecommand{\keywords}[1]{
\textbf{\textit{Keywords---}} #1}
\title{\textbf{A new interpolated pseudodifferential preconditioner for the Helmholtz equation in heterogeneous media}\thanks{\textbf{Funding:} The work of S. Acosta and T. Khajah was partially supported by NIH award 1R15EB035359-01A1. The work of B. Palacios was partially supported by Agencia Nacional de Investigaci\'on y Desarrollo (ANID) de Chile, Grant FONDECYT Iniciaci\'on N$^\circ$11220772. 
S. Acosta would like to thank the support provided by Texas Children’s Hospital.}}
\author[1]{Sebastian Acosta}
\author[2]{Tahsin Khajah}
\author[3]{Benjamin Palacios}
\affil[1]{\small Department of Pediatrics, Baylor College of Medicine and Texas Children's Hospital, Houston, TX, USA}
\affil[2]{\small Department of Mechanical Engineering, The University of Texas at Tyler, Tyler, TX, USA}
\affil[3]{\small Department of Mathematics, Pontificia Universidad Cat\'olica de Chile, Santiago, Chile}
\begin{document}
\maketitle

\begin{abstract}
This paper introduces a new pseudodifferential preconditioner for the Helmholtz equation in variable media with absorption. The pseudodifferential operator is associated with the multiplicative inverse to the symbol of the Helmholtz operator. This approach is well-suited for the intermediate and high-frequency regimes. The main novel idea for the fast evaluation of the preconditioner is to interpolate its symbol, not as a function of the (high-dimensional) phase-space variables, but as a function of the wave speed itself. Since the wave speed is a real-valued function, this approach allows us to interpolate in a \textit{univariate} setting even when the original problem is posed in a multidimensional physical space. As a result, the needed number of interpolation points is small, and the interpolation coefficients can be computed using the fast Fourier transform. The overall computational complexity is log-linear with respect to the degrees of freedom as inherited from the fast Fourier transform. We present some numerical experiments to illustrate the effectiveness of the preconditioner to solve the discrete Helmholtz equation using the GMRES iterative method. The implementation of an absorbing layer for scattering problems using a complex-valued wave speed is also developed. 
Limitations and possible extensions are also discussed.
\end{abstract}

\keywords{Wave propagation, acoustics, high frequency, pseudodifferential calculus, preconditioner, log-linear complexity}

%35S05  	Pseudodifferential operators as generalizations of partial differential operators
%35S15  	Boundary value problems for PDEs with pseudodifferential operators
% 35L05     Wave equation
% 41A21  	Padé approximation
% 41A28  	Simultaneous approximation

% 65N06  	Finite difference methods for boundary value problems involving PDEs

% 65T40  	Numerical methods for trigonometric approximation and interpolation

% 65D40  	Numerical approximation of high-dimensional functions; sparse grids

%%%%%%%%%%%%%%%%%%%%%%%%%%%%%%%%%%%
%%%%%%%%%  NEW SECTION  %%%%%%%%%%%
%%%%%%%%%%%%%%%%%%%%%%%%%%%%%%%%%%%

\section{Introduction}
\label{Section.Intro}

For many scientific, engineering, and biomedical applications, the numerical solution of the Helmholtz equation is essential. We are particularly motivated by ultrasound-based techniques to medical therapeutics and diagnosis where computational simulations of time-harmonic wave fields are playing an increasingly important role \cite{AmmariBook2009,AmmariBook2012,Soldati2014,Verweij2014}. Unfortunately, the discretization of the Helmholtz equation by finite difference or finite element methods renders matrices that are highly indefinite, ill-conditioned and notoriously difficult to invert, especially at high frequencies \cite{Gander2019,Ernst2012}. Mitigating these difficulties continues to be an active area of scientific research. Much of the effort has focused on designing effective and efficient preconditioners to improve the convergence of iterative solvers. Influential work includes the shifted-Laplacian preconditioner \cite{Erlangga2004,Erlangga2008,Cocquet2017}, the analytic incomplete LU preconditioner by Gander and Nataf \cite{Gander2000,Gander2005}, the sweeping preconditioners by Engquist and Ying \cite{Engquist2011a,Engquist2011} and related approaches based on domain decomposition and Schwarz-type methods \cite{Poulson2013, Stolk2013, Chen2013, Eslaminia2016, Zepeda-Nunez2016, Stolk2017, Zepeda-Nunez2018, Taus2020, Gong2021}. See a comprehensive review by Gander and Zhang \cite{Gander2019}. Other ingenious approaches include the use of controllability methods to obtain periodic solutions to the wave equation developed by Bristeau et al. \cite{Bristeau1998,GlowinskiLionsHe2008}, Heikkola et al. \cite{Heikkola2007}, Grote and Tang \cite{Grote2019}, Appelo et al. \cite{Appelo2020}, and a recent time-domain preconditioner by Stolk \cite{Stolk2021}. Also, there has been an effort to derive coercive or sign-definite formulations of the Helmholtz equation \cite{Moiola2014,Ganesh2020}.

In the present paper, we consider a Helmholtz equation of the form,
\begin{align} \label{Eqn.001}
c^{2} \Delta u + \omega^2  \, u + i \omega a \,  u = f, 
\end{align}
commonly employed to model ultrasound waves with angular frequency $\omega$, excited by a source $f$ in biological media with variable wave speed $c$ and damping coefficient $a$. We re-purpose the pseudodifferential analysis of the Helmholtz equation to design a semi-analytical matrix-free preconditioner in the form of a pseudodifferential operator
\begin{align} \label{Eqn.Intro01}
(Q v)(x) = (2 \pi)^{-d/2} \int q(x,\xi) \hat{v}(\xi) e^{i x \cdot \xi} d \xi.
\end{align}
Here $x \in \mathbb{R}^d$ is the spatial variable, $\xi \in \mathbb{R}^d$ is the frequency (Fourier dual) variable, and $\hat{v}$ is the Fourier transform of $v$. The symbol $q$ is defined in order to approximately invert the Helmholtz operator at high frequencies. The major drawback of a formulation such as \eqref{Eqn.Intro01} is that its direct evaluation is computationally expensive. Our approach to reduce the complexity of evaluating \eqref{Eqn.Intro01} is inspired by the work of Bao and Symes \cite{Bao1996} who devised an algorithm to evaluate each term of a ``classical'' pseudodifferential expansion. If the symbol $q$ is a term of a classical expansion, then by definition $q(x,\xi)$ is homogeneous in $\xi$ \cite{Taylor1991Book,Grigis1994Book}, which implies that for each fixed $x$, the function $q(x,\xi)$ is fully characterized by its profile on the unit-sphere $\xi/|\xi|$ in the frequency domain. Consequently, Bao and Symes based their algorithm on a Fourier series approximation of $q(x,\xi/|\xi|)$ which allowed them to separate the dependence on the spatial variable $x$ from the frequency variable $\xi$ in order to evaluate \eqref{Eqn.Intro01} efficiently. When $q(x,\xi)$ is not a classical term, then the separability between $x$ and $\xi$ becomes much more cumbersome to realize, especially for three-dimensional scenarios, $d=3$. In general, it is required to sample the frequency variable $\xi \in \mathbb{R}^d$ to define interpolation points and their special-function interpolants to establish the desired compression \cite{Demanet2011,Demanet2012a}. 

By contrast, we do not assume that $q$ is a general symbol or a term of a classical expansion, but instead assume that  
\begin{align} \label{Eqn.Intro02}
q(x,\xi) = q(c(x),\xi)
\end{align}
for a given function $c : \mathbb{R}^d \to [c_{\rm min} , c_{\rm max}] \subset \mathbb{R}$ that represents the wave speed in the physical domain. In other words, the pseudodifferential symbol $q$ depends on the spatial variable $x$ only through its dependence on the wave speed $c$, which is commonly the case for models of wave propagation. The range of $c$ is in $\mathbb{R}$, which opens up the possibility of using \textit{univariate} interpolation theory as the tool of approximation and compression. Specifically, let $\{c_1, c_2, ..., c_M \} \subset \mathbb{R}$ be a set of interpolation points and $\{ \varphi_1, \varphi_2, ..., \varphi_M \}$ an interpolation basis of functions such that $\varphi_{m}(c_n) = 0$ if $n \neq m$ and $\varphi_{m}(c_m) = 1$. Examples include piecewise linear functions, Lagrange polynomials, Hermite polynomials, etc. Given this interpolation setup, we define the interpolating symbol by
\begin{align}
q_M(c,\xi) = \sum_{m=1}^{M} \varphi_{m}(c)  q(c_m,\xi),
\label{Eqn.Intro03}
\end{align}
and its associated pseudodifferential operator by 
\begin{align}
(Q_M v)(x) = (2\pi)^{-d/2} \sum_{m=1}^{M}  \varphi_{m}(c(x)) \int  q(c_m,\xi) \hat{v}(\xi) e^{i x \cdot \xi} d \xi.
\label{Eqn.Intro04}
\end{align}
This construction allows us to approximate the action of the pseudodifferential operator \eqref{Eqn.Intro01} by the action of \eqref{Eqn.Intro04}, which computationally amounts to run one fast Fourier transform (FFT) of $v$ followed by $M$ inverse FFTs of the terms $q(c_m, \xi) \hat{v}(\xi)$ for $m=1,...,M$. The simple, yet powerful, assumption \eqref{Eqn.Intro02} is at the heart of our proposed fast preconditioner for the Helmholtz equation. Section \ref{Section.Prelim} briefly reviews some facts about pseudodifferential calculus, which we use in Section \ref{Section.PSD} to derive the pseudodifferential inversion of the Helmholtz operator and the proposed preconditioner. In Section \ref{Section.EvalAlgo} we describe the efficient evaluation of the preconditioner with nearly linear complexity with respect to the degrees of freedom and provide bounds on its accuracy based on univariate interpolation theory. In Section \ref{Section.NumExp} we present some numerical experiments to illustrate the effectiveness of the preconditioner to solve the Helmholtz equation using the GMRES iterative method. We illustrate the performance of the preconditioner for a range of values for the degrees of freedom, the number $M$ of interpolation points, and the frequency of the waves. 
Finally, in Section \ref{Section.Conclusion} we provide some concluding remarks and propose some areas of potential improvement.

%%%%%%%%%%%%%%%%%%%%%%%%%%%%%%%%%%%
%%%%%%%%%  NEW SECTION  %%%%%%%%%%%
%%%%%%%%%%%%%%%%%%%%%%%%%%%%%%%%%%%

\section{Preliminaries and notation}
\label{Section.Prelim}

In this section we briefly introduce the main definitions and facts about pseudodifferential calculus. Our guiding references are  \cite{Taylor1991Book,Grigis1994Book}. This section is self-contained and its notation should not be confused with notation from the rest of the paper. The first ingredient in this formulation is the Fourier transform $\mathcal{F}$ and its inverse $\mathcal{F}^{-1}$, which for an admissible function $v:  \mathbb{R}^d \to \mathbb{C}$, are respectively given by
\begin{align} \label{Eqn.FT}
\mathcal{F} v(\xi) = \hat{v}(\xi) = (2 \pi)^{-d/2} \int  v(x)  e^{-i x \cdot \xi}  dx 
\qquad \text{and} \qquad 
\mathcal{F}^{-1} \hat{v}(x) = (2 \pi)^{-d/2} \int  \hat{v}(\xi) e^{i x \cdot \xi}  d\xi.
\end{align}
A well-known fact about the Fourier transform is its relation with differentiation,
\begin{align} \label{Eqn.DTDiff}
D^{\alpha}_{x} v (x) = (2 \pi)^{-d/2} \int \xi^\alpha \hat{v}(\xi) e^{i x \cdot \xi} d\xi
\end{align}
for a multi-index $\alpha=(\alpha_1, ..., \alpha_d) \in \mathbb{N}^d$ where $D^{\alpha}_{x} = D_{1}^{\alpha_1} \dots D_{d}^{\alpha_d}$, $D_{j} = - i \partial_{x_j}$, and $\xi^\alpha =  \xi_{1}^{\alpha_1} \, \xi_{2}^{\alpha_2} \, ... \, \xi_{d}^{\alpha_d}$. For a multi-index $\alpha$, we also use the notation $|\alpha| = \alpha_1 + ... + \alpha_d$ and $\alpha ! = \alpha_1 ! \alpha_2 ! ... \alpha_d !$. 

A differential operator of order $n$ with variable coefficients $q_{\alpha}=q_{\alpha}(x)$ can be expressed as
\begin{align} \label{Eqn.PDO}
Q(x,D) = \sum_{|\alpha| \leq n} q_{\alpha}(x) D^{\alpha}_{x}.
\end{align}
Then we have
\begin{align} \label{Eqn.PDO2}
Q(x,D) v(x) = (2 \pi)^{-d/2} \int  q(x, \xi) \hat{v}(\xi) e^{i x \cdot \xi} d \xi 
\end{align}
where
\begin{align} \label{Eqn.PDO3}
 q(x, \xi) = \sum_{|\alpha| \leq n} q_{\alpha}(x)\xi^\alpha .
\end{align}
The function $q(x,\xi)$ is called the symbol of the differential operator. The representation \eqref{Eqn.PDO2} of differential operators can be used to generalize them to a larger class known as pseudodifferential operators. Note that for a differential operator, the function $q(x,\xi)$ is polynomial with respect to $\xi$. For pseudodifferential operators, the symbols are allowed to belong to larger sets, known as H\"{o}rmander's symbol classes defined as follows. Take $n \in \mathbb{R}$ (known as the order of the class) and define the symbol class $\SS^n$ to consist of all $C^\infty$ functions $q(x,\xi)$ satisfying
\begin{align} \label{Eqn.Hclass}
| D_{x}^{\beta} D_{\xi}^{\alpha} q(x,\xi)| \leq C_{\alpha \beta} (1 + |\xi|^2)^{(n - |\alpha|)/2}
\end{align}
for all multi-indices $\alpha$ and $\beta$, and some constants $C_{\alpha \beta} > 0$. For a symbol $q(x,\xi) \in \SS^n$, the associated operator $\Op{q}$, defined by
\begin{align} \label{Eqn.GeneralPSDO}
\Op{q} v(x) = (2\pi)^{-d/2} \int q(x,\xi)  \hat{v}(\xi) e^{ix\cdot \xi} d \xi,
\end{align}
is said to be a pseudodifferential operator that belongs to $\Op{\SS^n}$.
Note that by \eqref{Eqn.Hclass}, when we take a derivative of a symbol $q(x,\xi)$ with respect to $\xi$, we simply obtain another pseudodifferential operator with lower order. Given a pseudodifferential operator $P \in \Op{\SS^n}$, its symbol is denoted $\text{Sym}(P)$. Given two symbols $p(x,\xi)\in \SS^{n_1}$ and $q(x,\xi)\in \SS^{n_2}$, the composition of their respective operators has a symbol in $\SS^{n_1+n_2}$ that satisfies the following relation
\begin{align} \label{Eqn.Symbol_of_Composition}
\text{Sym}\left(\Op{p}\Op{q}\right) =  \sum_{|\alpha| \geq 0}\frac{i^{|\alpha|}}{\alpha!}D^\alpha_\xi p(x,\xi) D^\alpha_x q(x,\xi), \qquad \text{mod $\SS^{-\infty}$},
\end{align} 
where the notation \text{mod $\SS^{-\infty}$} means that the difference between the left- and right-hand sides of the equality belongs to $\SS^{-\infty} = \cap_{n \in \mathbb{N}} \SS^{-n}$. In particular, if $p \in \SS^{n}$ is an elliptic symbol bounded away from zero, then $p^{-1} \in \SS^{-n}$ and
\begin{align} \label{Eqn.Symbol_of_Inverse}
\Op{p^{-1}}\Op{p} = I + \sum_{|\alpha| \geq 1}\frac{i^{|\alpha|}}{\alpha!} 
\Op{D^\alpha_\xi p^{-1}(x,\xi) D^\alpha_x p(x,\xi)}, \qquad \text{mod $\SS^{-\infty}$},
\end{align} 
where $D_{\xi}^{\alpha} p^{-1} D_{x}^{\alpha} p \in \SS^{-|\alpha|}$. Therefore, for an elliptic symbol $p \in \SS^{n}$, $(\Op{p^{-1}}\Op{p} - I) \in \SS^{-1}$. In other words, the operator associated with the multiplicative inverse of the symbol $p$ approximates the inverse of $\Op{p}$ up to an operator of order $-1$.

Also, recall that for a pseudodifferential symbol $q \in \SS^{n}$, the following Sobolev estimate
\begin{align} 
\| \Op{q} v \|_{H^{0}(\mathbb{R}^d)} &\leq C_{00} \| v \|_{H^{n}(\mathbb{R}^d)} 
 \label{Eqn.SobNormPsiOp}
\end{align}
holds provided that $v \in H^{n}(\mathbb{R}^d)$ and for the constant $C_{00}$ appearing in \eqref{Eqn.Hclass}. This can be shown by defining
\begin{align} 
f(x) = (2 \pi)^{-d/2} \int q(x,\xi) \hat{v}(\xi) e^{i x \cdot \xi} d \xi \qquad \text{and} \qquad g(z,x) = (2 \pi)^{-d/2} \int q(z,\xi) \hat{v}(\xi) e^{i x \cdot \xi} d \xi \nonumber
\end{align}
for $v \in L^{1}(\mathbb{R}^d) \cap L^{2}(\mathbb{R}^d) \cap H^{n}(\mathbb{R}^d)$ so that $f$ and $g$ are continuous. Also notice that $f(x) = g(x,x)$ so that $|f(x)| \leq \sup_{z} |g(z,x)|$ which implies that $\| f \|_{H^{0}(\mathbb{R}^d)} \leq \| g \|_{L^{\infty}(\mathbb{R}^d; H^{0}(\mathbb{R}^d))}$. Hence, using Plancherel's theorem, the bound \eqref{Eqn.Hclass} and the equivalence of Sobolev norms in the Fourier space, we obtain
\begin{align} 
\| \Op{q} v \|_{H^{0}(\mathbb{R}^d)} = \| f \|_{H^{0}(\mathbb{R}^d)} \leq \| g \|_{L^{\infty}(\mathbb{R}^d; H^{0}(\mathbb{R}^d))} = \left(  \int \sup_{z} |q(z,\xi)|^2 |\hat{v}(\xi)|^2 d \xi \right)^{1/2} \leq C_{00} \| v \|_{H^{n}(\mathbb{R}^d)} \nonumber
\end{align}
which is extended to all $v \in H^{n}(\mathbb{R}^d)$ due to the density of $L^{1}(\mathbb{R}^d) \cap L^{2}(\mathbb{R}^d) \cap H^{n}(\mathbb{R}^d)$.

%%%%%%%%%%%%%%%%%%%%%%%%%%%%%%%%%%%
%%%%%%%%%  NEW SECTION  %%%%%%%%%%%
%%%%%%%%%%%%%%%%%%%%%%%%%%%%%%%%%%%
\section{Pseudodifferential inversion of the Helmholtz equation}
\label{Section.PSD}

We are interested in efficiently computing approximations to the solution $u$ of the Helmholtz equation \eqref{Eqn.001} subject to the Sommerfeld radiation condition. In practice, we can only approximately compute the solution $u$ restricted to a bounded domain $\Omega \subset \mathbb{R}^d$ with Lipschitz boundary $\partial \Omega$. 
In the Helmholtz equation \eqref{Eqn.001}, $\Delta = \partial_{x_1}^2 + ... + \partial_{x_d}^2$ denotes the Laplacian, $\omega$ is the angular frequency of oscillation, $c=c(x)$ is the variable wave speed, $a=a(x)$ is the damping coefficient, and $f$ is a given source with support in $\Omega$. We assume that the wave speed $c(x)$ is smooth, and bounded from above and below, that is, $0 < c_{\rm min} \leq c \leq c_{\rm max} < \infty$. Similarly, we assume that the damping coefficient $a(x)$ is a smooth function such that $0 < a_{\rm min} \leq a \leq a_{\rm max} < \infty$. We also assume that there are constant background properties $c_{o}$ and $a_{o}$ such that
$(c(x) - c_{o})$ and $(a(x) - a_o)$ are compactly supported in $\Omega$. The well-posedness of this problem in both weak and strong formulations has been established. See, for instance \cite{McLean2000,ColtonKressBook2013}.

% if $\Omega = \mathbb{R}^d$, 
% or periodic boundary conditions if $\Omega \subset \mathbb{R}^d$ is a hyper-rectangle. 

Our approach relies on using pseudodifferential calculus to approximate the inverse of the Helmholtz operator governing \eqref{Eqn.001}. The Helmholtz operator is defined by
\begin{align}
P = \omega^2 + i \omega a + c^2 \Delta  \in \Op{\SS^2}, \label{Eqn.003}
\end{align}
and its pseudodifferential symbol is given by
\begin{align}
\Sym{P} = p = \omega^2 + i \omega a - c^{2} \left( \xi_{1}^2 + ...+ \xi_{d}^2 \right)  =   \omega^2 + i \omega a - c^{2} |\xi|^2 \in \SS^2,\label{Eqn.005}
\end{align}
where the Fourier dual of the spatial variable $x = (x_{1}, ..., x_{d}) \in \mathbb{R}^d$ is denoted by $\xi = (\xi_{1}, ..., \xi_{d}) \in \mathbb{R}^d$. Due to the above assumptions on the wave speed and damping, it can be shown that the operator $P^{-1} \in \Op{\SS^{-2}}$ exists \cite{Taylor1991Book,Grigis1994Book}. Our objective is to construct an approximation for the symbol of $P^{-1}$. First, using the formula \eqref{Eqn.Symbol_of_Composition} for the symbol of the composition of the operators $P \in \Op{\SS^{2}}$ and $P^{-1} \in \Op{\SS^{-2}}$, we obtain
\begin{align}
1 = \Sym{P P^{-1}} = \sum_{|\alpha| \geq 0} \frac{i^{|\alpha|}}{\alpha !} D_{\xi}^{\alpha} \Sym{P} D_{x}^{\alpha} \Sym{P^{-1}}, \qquad \text{mod $\SS^{-\infty}$}.   \label{Eqn.009}
\end{align}
Note that the expansion on the right-hand side of \eqref{Eqn.009} contains only 3 terms because $\Sym{P}$, given by \eqref{Eqn.005}, is a second degree polynomial with respect to $\xi$. This implies that $\Sym{P^{-1}}$ satisfies the following second-order linear partial differential equation
\begin{align}
c^2 \Delta \, \Sym{P^{-1}} + 2 i c^{2} \xi \cdot \nabla \Sym{P^{-1}} + \left( \omega^2 + i \omega a - c^{2} |\xi|^2 \right) \, \Sym{P^{-1}} = 1, \qquad \text{mod $\SS^{-\infty}$},
\label{Eqn.011}
\end{align}
as a function of the space variables $x \in \mathbb{R}^d$ for each value of $\xi \in \mathbb{R}^d$.

% %%%%%%%%%%%%%%%%%%%%%%%%%%%%%%%%%%%%%

Now we propose the following expansion for the symbol of $P^{-1}$,
\begin{align}
q =  \sum_{j=2}^{+ \infty} q_{-j}, \qquad \text{mod $\SS^{-\infty}$} 
\label{Eqn.020}
\end{align}
where $q_{-j}\in \SS^{-j}$ and where the principal symbol is defined as 
\begin{align}
q_{-2} = \frac{1}{p} = \frac{1}{\omega^2 + i \omega a - c^2 |\xi|^2}.
\label{Eqn.021}
\end{align}

Plugging \eqref{Eqn.020}-\eqref{Eqn.021} into \eqref{Eqn.011} and gathering terms with the same classes, we define all of the symbols recursively
\begin{align}
q_{-3} &= - \frac{  2i c^{2} \xi \cdot \nabla    q_{-2}}{ \omega^2 + i \omega a - c^{2} |\xi|^2 } =  \frac{ 2i c^{2} \xi \cdot \left( i \omega \nabla a - |\xi|^2 \nabla c^{2} \right) }{ \left( \omega^2 + i \omega a - c^{2} |\xi|^2 \right)^3 },
\label{Eqn.023} \\
q_{-j} &= - \frac{2i c^2 \xi \cdot \nabla q_{-j+1} + c^2 \Delta q_{-j+2} }{\omega^2 + i \omega a - c^{2} |\xi|^2}, \qquad j \geq 4.
\label{Eqn.025}
\end{align}

As opposed to a classical expansion of the symbol $\Sym{P^{-1}}$, the adhoc expansion \eqref{Eqn.020} contains exactly one non-vanishing term when the acoustic medium is homogeneous. Hence, we expect a truncation of this expansion to be more accurate than a truncated classical expansion when the medium is homogeneous or when there are small variations in the wave speed $c$ and damping $a$.

The proposed preconditioner is the pseudodifferential operator associated with the principal symbol of the  expansion \eqref{Eqn.020},
\begin{align} \label{Eqn.Precond}
(Qv)(x) = (2\pi)^{-d/2} \int q_{-2}(x,\xi)  \hat{v}(\xi) e^{i x \cdot \xi} d \xi.
\end{align}

Note that the symbol $p \in \SS^{2}$ defined in \eqref{Eqn.005} is clearly elliptic \cite{Taylor1991Book,Grigis1994Book}. Therefore, following \eqref{Eqn.Symbol_of_Inverse}, we obtain that $Q P - I = R \in \Op{\SS^{-1}}$. This means that the proposed preconditioner $Q$ defined by \eqref{Eqn.Precond} approximates the inverse of the Helmholtz operator up to an error $R$ with pseudodifferential order of $-1$. By definition, such an operator possesses a symbol $r$ that decays in the frequency domain, ie., $|r| \leq C (1 + |\xi|^2)^{-1/2} \sim C/|\xi|$ as $|\xi| \to \infty$.
Consequently, we would expect the preconditioner to be more effective when restricted to problems whose solutions are highly oscillatory. We observe numerical evidence of this behavior in Section \ref{Section.NumExp} below. 

We may also characterize the performance of the preconditioner in terms of its spectral behavior. For this purpose, we consider the solutions $u$ of \eqref{Eqn.001} restricted to the bounded domain $\Omega$ using the restriction operator $\mathcal{E}_{\Omega}$ that maps $H^s(\mathbb{R}^d) \owns u \mapsto u|_{\Omega} \in H^s(\Omega)$.
Now note that since $u$ is a radiating solution to the homogeneous Helmholtz equation in $\mathbb{R}^d \setminus \Omega$ (because $f$ is supported in $\Omega$), then $Pu$ has support in $\Omega$ and $u|_{\mathbb{R}^d \setminus \Omega}$ is fully determined by $u|_{\Omega}$. Therefore, we can think of $P$ as mapping $H^s(\Omega)$ into $H^{s-2}(\Omega)$. The operator $R_{\Omega} = \mathcal{E}_{\Omega} Q P - I : H^{s}(\Omega) \to H^{s}(\Omega)$ still has pseudodifferential order of $-1$. See details in \cite[Ch. 4]{Grigis1994Book}. Hence, for any Sobolev scale $s \in \mathbb{R}$, the operator $R_{\Omega} = \mathcal{E}_{\Omega} Q P - I$ is bounded from $H^{s}(\Omega)$ to $H^{s+1}(\Omega)$ and compact from $H^{s}(\Omega)$ to $H^{s}(\Omega)$ due to the compact embedding $H^{s+1}(\Omega) \Subset H^{s}(\Omega)$ for a bounded domain $\Omega$ with Lipschitz boundary \cite[Thm. 3.27]{McLean2000}. As a direct consequence of the spectral behavior of compact operators (\cite[Thm 3.9]{Kress-Book-1999}, \cite[Corollary 2.2.13]{Drabek-Milota-2007}, \cite[Thm 6.26, Ch 3]{Kato1980}), we can make the following assertion.

\begin{theorem} \label{Thm.Spectral1}
The preconditioned operator $\mathcal{E}_{\Omega} Q P : H^{s}(\Omega) \to H^{s}(\Omega)$ is Fredholm (being a compact perturbation of the identity), and 
its spectrum consists of $z=1$ and at most a countable set of eigenvalues with no point of accumulation except, possibly, $z = 1$.
\end{theorem}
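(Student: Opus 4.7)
The plan is to unpack the chain of facts already assembled in the paragraph preceding the statement. First I would set $K := R_\Omega = \mathcal{E}_{\Omega} Q P - I$, so that the preconditioned operator is $I + K$ as a map $H^{s}(\Omega) \to H^{s}(\Omega)$. The fact that $K$ is a pseudodifferential operator of order $-1$ on $\Omega$ has already been established via the symbol-calculus identity \eqref{Eqn.Symbol_of_Inverse}, which yields $QP = I + R$ with $R \in \Op{\SS^{-1}}$, combined with the localization procedure for bounded domains cited from Grigis--Sj\"ostrand.

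Next, I would upgrade the $H^{s} \to H^{s+1}$ mapping property of $K$ (the natural $H^{s}$-version of the estimate \eqref{Eqn.SobNormPsiOp} applied to an order $-1$ symbol) to compactness on $H^{s}(\Omega)$ by composing with the compact Sobolev embedding $H^{s+1}(\Omega) \Subset H^{s}(\Omega)$, i.e.\ the Rellich--Kondrachov theorem, which is valid because $\Omega$ is bounded with Lipschitz boundary. With $K$ compact on $H^{s}(\Omega)$, the Fredholm assertion is immediate: any compact perturbation of the identity on a Banach space is Fredholm of index zero by Riesz--Schauder.

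For the spectral statement, I would invoke the classical theorem that the spectrum of a compact operator on an infinite-dimensional Banach space consists of $\{0\}$ together with at most a countable set of eigenvalues of finite algebraic multiplicity whose only possible accumulation point is $0$. Translating by $1$, the spectrum of $I + K$ then consists of $1$ together with at most countably many eigenvalues with no accumulation point other than, possibly, $1$. This is precisely the assertion of the theorem, and the three references already cited (Kress; Dr\'abek--Milota; Kato) provide the exact form.

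I do not foresee a genuine obstacle, since the theorem is essentially a packaging of (i) the symbol calculus giving $QP - I \in \Op{\SS^{-1}}$, (ii) Rellich--Kondrachov, and (iii) Riesz--Schauder. The only mildly delicate point is that the symbolic identity holds a priori on $\mathbb{R}^d$, whereas the statement concerns $\mathcal{E}_{\Omega} Q P - I$ on the bounded domain $\Omega$; I would simply quote the Grigis--Sj\"ostrand reference already flagged immediately above the theorem rather than reprove that the restriction preserves the $-1$ order.
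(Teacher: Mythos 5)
Your proposal is correct and follows essentially the same route as the paper: the paper's argument (given in the paragraph preceding the theorem) likewise writes $\mathcal{E}_{\Omega} Q P = I + R_{\Omega}$ with $R_{\Omega}$ of pseudodifferential order $-1$, obtains compactness of $R_{\Omega}$ on $H^{s}(\Omega)$ from the $H^{s}\to H^{s+1}$ bound together with the compact embedding $H^{s+1}(\Omega) \Subset H^{s}(\Omega)$, and then invokes the Riesz--Schauder spectral theory of compact operators, citing the same references. No substantive difference.
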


This theorem characterizes the spectral performance of the preconditioner $Q$ at the continuous level. In computational practice, both $Q$ and $P$ are discretized and it is not clear at this point how Theorem \ref{Thm.Spectral1} translates into the discrete setting. However, Theorem \ref{Thm.Spectral1} provides an insight regarding the ability of the preconditioner $Q$ to cluster the eigenvalues of $QP$, away from zero and infinity, in order to improve the convergence of the GMRES iterative method \cite[Part VI]{Trefethen2022book}.

%%%%%%%%%%%%%%%%%%%%%%%%%%%%%%%%%%%
%%%%%%%%%  NEW SECTION  %%%%%%%%%%%
%%%%%%%%%%%%%%%%%%%%%%%%%%%%%%%%%%%
\section{Evaluation algorithm}
\label{Section.EvalAlgo}

Now we describe the proposed algorithm to evaluate the pseudodifferential preconditioner defined by \eqref{Eqn.Precond}, its expected computational complexity and accuracy. We propose an algorithm for a pseudodifferential operator of the following form
\begin{align}
(Q v)(x) = (2 \pi)^{-d/2} g(x) \int q(c(x),\xi) \hat{v}(\xi) e^{i x \cdot \xi} d \xi,
\label{Eqn.101}
\end{align}
where the symbol $q$ satisfies \eqref{Eqn.Intro02}, that is, it depends on the spatial variable $x \in \mathbb{R}^d$ through the function $c=c(x)$ which plays the role of the heterogeneous wave speed in acoustic problems. For the principal symbol \eqref{Eqn.021}, this assumption requires that the damping coefficient $a=a(c(x))$ be a function of the wave speed $c$. This means that physical media with the same wave speed have the same damping coefficient as well. Also note that multiplication by $g(x)$ in \eqref{Eqn.101} does not pose any additional computational difficulties. Hence, in the sequel we set $g(x) = 1$ for simplicity.

%%%%%%%%%%%%%%%%%%%%%%%%%%%%%%%%%%%%%%%%%%
\subsection{Algorithm} \label{Subsec.Algorithm}
As stated in the previous section, it is assumed that $c=c(x)$ is a real-valued smooth function bounded from below and from above such that $0 < c_{\rm min} \leq c \leq c_{\rm max} < \infty$. Let $\{ c_{1}, c_{2}, ..., c_{M} \} \subset [c_{\rm min}, c_{\rm max}]$ be a finite ordered set of distinct interpolation points such that $c_1 = c_{\rm min}$ and $c_M = c_{\rm max}$. Also let $\{ \varphi_{1}, ..., \varphi_{M} \}$ be an interpolating basis associated with the points $\{ c_{1}, c_{2}, ..., c_{M} \}$, that is, each $\varphi_{m} : [c_{\rm min}, c_{\rm max}] \to [0,1]$ is a function such that $\varphi_{m}(c_{n}) = 0$ for all $n \neq m$, and $\varphi_{m}(c_{m}) = 1$ for each $m=1,...,M$. Using this interpolating basis, we define the pseudodifferential interpolating symbol as
\begin{align}
q_M(c,\xi) = \sum_{m=1}^{M} \varphi_{m}(c)  q(c_m,\xi),
\label{Eqn.105}
\end{align}
and its associated pseudodifferential operator as
\begin{align}
(Q_M v)(x) = (2 \pi)^{-d/2} \sum_{m=1}^{M}   \varphi_{m}(c(x)) \int q(c_m,\xi) \hat{v}(\xi) e^{i x \cdot \xi} d \xi.
\label{Eqn.103}
\end{align}

%%%%%%%%%%%%%%%%%%%%%%%%%%%%%%%%%%%%%%%%%%
\subsection{Complexity estimates} 
The complexity of the algorithm will be analyzed in terms of the number of multiplications and the discretization of the spatial domain. For simplicity, let us consider a regular $d$-dimensional grid of a hybercube containing the support of the input function $v=v(x)$. Let $N$ denote the number of grid points in each coordinate direction. Hence, there are $N^d$ discrete evaluation points known as degrees of freedom (DOF). We assume that the discrete Fourier transform and the inverse discrete Fourier transform are computed using FFT-type algorithms with one-dimensional complexity $\Order{N \log N}$. Hence, we expect the direct computational evaluation of a pseudodifferential operator of the form \eqref{Eqn.101} to have $\Order{ N^{2d} \log N}$ complexity.

For the proposed algorithm, the practical evaluation of \eqref{Eqn.103} amounts to compute one FFT of $v(x)$ followed by the sum of $M$ inverse FFTs, one for each input $q(c_m,\xi) \hat{v}(\xi)$ for $m=1,...,M$. Hence, we obtain the following result.
\begin{lemma} \label{Lemma.2}
The proposed algorithm based on \eqref{Eqn.103} to approximate the action of a pseudodifferential operator of the form \eqref{Eqn.101} has $\Order{ M N^{d} \log N}$ complexity.
\end{lemma}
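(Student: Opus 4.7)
The plan is to do a direct operation count of the algorithm described by \eqref{Eqn.103}, treating the spatial dimension $d$ as a fixed constant absorbed in the implicit constants of the $\Order{\cdot}$ notation, and assuming (as stated) that a $d$-dimensional FFT on a grid of $N^d$ points executes in $\Order{N^d \log N}$ operations. I will also assume the values $\{q(c_m, \xi_k)\}$ on the grid points $\xi_k$, as well as the coefficients $\{\varphi_m(c(x_k))\}$ on the grid points $x_k$, are precomputed and stored in an offline stage, so they do not enter the online complexity count.

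First I would rewrite the discrete version of \eqref{Eqn.103} as the composition of four elementary steps: (i) one forward FFT to produce $\hat{v}(\xi_k)$ from $v(x_k)$; (ii) for each $m=1,\dots,M$, a pointwise multiplication $q(c_m,\xi_k)\hat{v}(\xi_k)$ over all grid points in $\xi$; (iii) for each $m$, an inverse FFT of the product obtained in (ii); (iv) for each $m$, a pointwise multiplication by $\varphi_m(c(x_k))$ in physical space, followed by accumulation into the output array. Then I would tally the costs: step (i) contributes $\Order{N^d \log N}$; each iteration of (ii) contributes $\Order{N^d}$; each iteration of (iii) contributes $\Order{N^d \log N}$; each iteration of (iv) contributes $\Order{N^d}$, with the accumulation over $m$ adding another $\Order{M N^d}$.

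Summing these contributions gives
\begin{align}
\Order{N^d \log N} \;+\; M \cdot \Order{N^d \log N} \;+\; M \cdot \Order{N^d} \;=\; \Order{M N^d \log N}, \nonumber
\end{align}
which is the claimed bound. The dominant terms are the $M$ inverse FFTs in step (iii); the single forward FFT in step (i) is absorbed, and the pointwise multiplications are of lower order.

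There is no real obstacle here; the argument is a routine count. The only subtlety worth flagging in the write-up is the implicit assumption that the symbol samples $q(c_m,\xi_k)$ and the interpolation weights $\varphi_m(c(x_k))$ are prepared in a one-time precomputation that is independent of $v$ (and whose cost, $\Order{M N^d}$ for storage and setup, is amortized over the many GMRES iterations in which the preconditioner is applied). With that caveat, the lemma follows directly from the FFT complexity.
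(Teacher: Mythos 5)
Your argument is correct and follows essentially the same route as the paper: one forward FFT followed by $M$ inverse FFTs, each costing $\Order{N^d \log N}$, with the pointwise multiplications being lower order. Your additional bookkeeping of the symbol samples, interpolation weights, and accumulation steps is a harmless elaboration of the same count.
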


Since DOF=$N^d$, the proposed algorithm is nearly linear with respect to the degrees of freedom. Hence, in general, we expect the proposed algorithm to be much more computationally efficient than the naive algorithm when $M \ll N^{d}$, provided that $M$ is sufficiently large to obtain satisfactory accuracy for the interpolation of the pseudodifferential symbol $q$. The theory of interpolation dictates that, for a well-chosen interpolation basis $\{ \varphi_{1}, ..., \varphi_{M} \}$, the number $M$ depends only on the smoothness of the symbol $q$ to render accuracy in the sup norm. Hence, when $q$ is sufficiently smooth, we can expect $M$ to be independent of $N$. This is extremely advantageous in situations where $v$ is highly oscillatory or when $q$ promotes high frequencies, thus requiring $N$ to be large to resolve such oscillations. For instance, in a typical setting for ultrasound propagation in biomedical applications, there are about $\sim 100$ wavelengths across the domain. If each wavelength is discretized with about $\sim 10$ points, then $N \sim 1000$. Therefore, DOF=$N^d \sim 10^6$ in a two-dimensional setting ($d=2$). This number of DOF is many orders of magnitude greater than the number of interpolation points $M \sim 10$ needed to resolve the profile of a pseudodifferential symbol like \eqref{Eqn.021} with reasonable accuracy. We characterize the accuracy of \eqref{Eqn.103} in more detail as follows.

%%%%%%%%%%%%%%%%%%%%%%%%%%%%%%%%%%%%%%%%%%
\subsection{Accuracy and conditioning estimates} 
\label{Subsec.Accuracy}

The accuracy of the proposed algorithm \eqref{Eqn.105}-\eqref{Eqn.103} depends on its ability to approximate the pseudodifferential symbol $q$ of the operator \eqref{Eqn.101}. A bound on the error $|q - q_M|$ can be established depending on the choice of interpolation points $\{ c_1, c_2, ..., c_M \}$ and interpolation functions $\{ \varphi_1, \varphi_2, ..., \varphi_M \}$. Here we describe error bounds for the piecewise linear interpolation strategy. For details and other polynomial strategies, see \cite[Ch. 8]{Kress1998} or \cite[Ch. 3]{Atkinson-Han-Book-2001}. The proof follows standard arguments for the real and imaginary parts of the difference $(q - q_M)$.
\begin{lemma} \label{Lemma.3}
Let $q : [c_{\rm min}, c_{\rm max}] \times \mathbb{R}^d \to \mathbb{C}$ be a smooth function such that
\begin{align}
\left| \frac{\partial^2 q}{\partial c ^2}(c,\xi) \right| \leq C (1 + |\xi|^2 )^{n/2},
\label{Eqn.109}
\end{align}
for some $n \in \mathbb{R}$, and for some constant $C>0$ independent of $\xi \in \mathbb{R}^d$. If $\{ \varphi_1, \varphi_2, ..., \varphi_M \}$ is a piecewise linear interpolation basis for the equidistant interpolation points $\{ c_1, c_2, ..., c_M \}$ with step size $h_M = (c_{\rm max} - c_{\rm min}) / (M-1)$. Then 
\begin{align}
\left| q(c,\xi) - q_M(c,\xi) \right| \leq C h_M^2  (1 + |\xi|^2 )^{n/2}.
\label{Eqn.115} 
\end{align}
\end{lemma}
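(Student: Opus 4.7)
The plan is to reduce the statement to a standard univariate piecewise linear interpolation error estimate, applied with $\xi \in \mathbb{R}^d$ treated as a fixed parameter, and then to use hypothesis \eqref{Eqn.109} to make the resulting bound uniform in $\xi$ (up to the polynomial growth factor $(1+|\xi|^2)^{n/2}$).

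First I would fix an arbitrary $\xi \in \mathbb{R}^d$ and consider the scalar function $\psi_\xi : [c_{\rm min}, c_{\rm max}] \to \mathbb{C}$ defined by $\psi_\xi(c) = q(c,\xi)$. By construction \eqref{Eqn.105}, the map $c \mapsto q_M(c,\xi)$ is exactly the piecewise linear interpolant of $\psi_\xi$ at the equidistant nodes $\{c_1, \ldots, c_M\}$. Since $q_M$ is complex-valued, I would split $\psi_\xi = \text{Re}\,\psi_\xi + i\, \text{Im}\,\psi_\xi$ and handle each component separately by the same argument, invoking the triangle inequality at the end; this is the sense in which the lemma reduces to the classical real-valued result.

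Next, on any subinterval $[c_m, c_{m+1}]$ the standard error formula for linear interpolation of a $C^2$ function gives, for $c \in [c_m, c_{m+1}]$,
\begin{align}
\psi_\xi(c) - q_M(c,\xi) = \frac{(c-c_m)(c-c_{m+1})}{2}\, \psi_\xi''(\zeta), \nonumber
\end{align}
for some $\zeta = \zeta(c,\xi) \in (c_m, c_{m+1})$ (applied to the real and imaginary parts separately, with possibly different intermediate points). Since the nodes are equidistant with spacing $h_M$, the quadratic $(c-c_m)(c_{m+1}-c)$ is maximized at the midpoint with value $h_M^2/4$, yielding
\begin{align}
\left| \psi_\xi(c) - q_M(c,\xi) \right| \leq \frac{h_M^2}{8}\, \sup_{c \in [c_{\rm min}, c_{\rm max}]} \left| \partial_c^2 q(c,\xi) \right|. \nonumber
\end{align}

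Finally, I would apply hypothesis \eqref{Eqn.109} uniformly in $c$ to bound the supremum by $C(1+|\xi|^2)^{n/2}$, which delivers \eqref{Eqn.115} (absorbing the harmless factor $1/8$ into the constant $C$). I do not anticipate any genuine obstacle here: the only thing to be careful about is that the constant $C$ in \eqref{Eqn.109} is independent of $\xi$, so the bound on the interpolation error inherits the same $\xi$-uniformity, which is exactly what is needed for the subsequent use of this lemma within pseudodifferential symbol classes $\SS^n$.
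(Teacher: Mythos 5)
Your argument is correct and is precisely the ``standard argument applied to the real and imaginary parts'' that the paper invokes without writing out (it only cites Kress and Atkinson--Han for the classical piecewise linear interpolation error bound). The constant you obtain, $\tfrac{1}{8}C$ per component (so at most $\tfrac{1}{4}C$ after recombining real and imaginary parts via the triangle inequality), is absorbed into the $C$ of \eqref{Eqn.115}, exactly as the paper intends.
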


As a result of this estimate, we can easily obtain the following error bound for the evaluation of the pseudodifferential operators \eqref{Eqn.101}-\eqref{Eqn.103}.

\begin{theorem} \label{Thm.Acc}
Let the symbol $q$, interpolation points $\{c_1, c_2, ..., c_M \}$, interpolation basis $\{ \varphi_1, \varphi_2, ..., \varphi_M \}$ and $h_M$ satisfy the assumptions of Lemma \ref{Lemma.3}. Then 
\begin{align}
\| (Q - Q_M)v \|_{H^{0}(\mathbb{R}^d)}  \leq  C h_M^2 \| v \|_{H^{n}(\mathbb{R}^d)}
\label{Eqn.125} 
\end{align}
for some constant $C>0$ independent of $M$.
\end{theorem}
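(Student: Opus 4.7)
The plan is to recognize that $(Q - Q_M)v$ is itself a pseudodifferential-type expression whose symbol is $q - q_M$ evaluated at $c(x)$, and then to import the Sobolev estimate \eqref{Eqn.SobNormPsiOp} almost verbatim, with the pointwise bound from Lemma \ref{Lemma.3} playing the role of the constant $C_{00}$.

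\medskip

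First I would write out the difference operator explicitly. By \eqref{Eqn.101} and \eqref{Eqn.103} (with $g \equiv 1$),
\begin{equation*}
(Q - Q_M)v(x) = (2\pi)^{-d/2}\int \bigl(q(c(x),\xi) - q_M(c(x),\xi)\bigr)\hat v(\xi)e^{ix\cdot\xi}\,d\xi,
\end{equation*}
so the object to bound is a pseudodifferential expression whose symbol (as a function of $x$ and $\xi$) is $r(x,\xi) := q(c(x),\xi) - q_M(c(x),\xi)$. By Lemma \ref{Lemma.3}, for every $x\in\mathbb{R}^d$ and every $\xi\in\mathbb{R}^d$,
\begin{equation*}
|r(x,\xi)| = \bigl|q(c(x),\xi) - q_M(c(x),\xi)\bigr| \leq C h_M^2 (1 + |\xi|^2)^{n/2},
\end{equation*}
since $c(x)\in[c_{\mathrm{min}},c_{\mathrm{max}}]$ and the bound in Lemma \ref{Lemma.3} is uniform on that interval.

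\medskip

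Next I would reproduce the argument in Section \ref{Section.Prelim} that established \eqref{Eqn.SobNormPsiOp}. For $v\in L^1\cap L^2\cap H^n(\mathbb{R}^d)$, define
\begin{equation*}
f(x) = (2\pi)^{-d/2}\int r(x,\xi)\hat v(\xi)e^{ix\cdot\xi}\,d\xi,\qquad g(z,x) = (2\pi)^{-d/2}\int \bigl(q(z,\xi) - q_M(z,\xi)\bigr)\hat v(\xi)e^{ix\cdot\xi}\,d\xi,
\end{equation*}
so that $f(x)=g(c(x),x)$ and hence $|f(x)|\leq \sup_{z\in[c_{\mathrm{min}},c_{\mathrm{max}}]}|g(z,x)|$. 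By Plancherel applied in $x$ for each fixed $z$, followed by the pointwise bound on $r$,
\begin{equation*}
\|f\|_{L^2}^2 \leq \int \sup_{z}\bigl|q(z,\xi)-q_M(z,\xi)\bigr|^2|\hat v(\xi)|^2\,d\xi \leq C^2 h_M^4 \int (1+|\xi|^2)^n|\hat v(\xi)|^2\,d\xi = C^2 h_M^4 \|v\|_{H^n(\mathbb{R}^d)}^2.
\end{equation*}
Taking square roots yields \eqref{Eqn.125} on the dense subspace $L^1\cap L^2\cap H^n$, and the estimate extends to all of $H^n(\mathbb{R}^d)$ by density, as in the preliminaries.

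\medskip

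There is no real obstacle here; the only subtlety to handle carefully is the role of the composition $c(x)$ in the $x$-dependence of the symbol. Because the bound of Lemma \ref{Lemma.3} is uniform in $c\in[c_{\mathrm{min}},c_{\mathrm{max}}]$, the $\sup_{z}$ step introduces no loss, and the Lipschitz/smoothness of $c$ plays no role in the zeroth-order Sobolev estimate. If one wanted a sharper constant, one could instead use $\sup_z$ restricted to the image of $c$, but this is not needed for the statement.
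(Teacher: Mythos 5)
Your proposal is correct and follows essentially the same route as the paper: bound $\|(Q-Q_M)v\|_{H^0}^2$ by $\int \sup_x |q(c(x),\xi)-q_M(c(x),\xi)|^2|\hat v(\xi)|^2\,d\xi$ via the argument behind \eqref{Eqn.SobNormPsiOp}, then invoke Lemma \ref{Lemma.3} and the Fourier characterization of the $H^n$ norm. The only difference is that you spell out the $g(z,x)$ device and the density step explicitly, whereas the paper cites \eqref{Eqn.SobNormPsiOp} for that machinery.
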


\begin{proof}
To obtain the estimate \eqref{Eqn.125}, we simply employ the definition of the pseudodifferential operators, the equivalence of Sobolev norms in the Fourier space, and estimate \eqref{Eqn.SobNormPsiOp} and \eqref{Eqn.115} from Lemma \ref{Lemma.3} to obtain,
\begin{align}
\| (Q-Q_{M}) v \|_{H^{0}(\mathbb{R}^d)}^2
&\leq \int \sup_{x} |q(c(x),\xi) - q_{M}(c(x),\xi)|^2 |\hat{v}(\xi)|^2 d \xi \nonumber \\
&\leq (C h_{M}^2)^2 \int (1 + |\xi|^2)^{n} |\hat{v}(\xi)|^2 d \xi
\nonumber \\
&\leq (C h_{M}^2)^2 \| v \|_{H^{n}(\mathbb{R}^d)}^2
\nonumber
\end{align}
which renders the desired result for the same constant $C>0$ appearing in the assumption \eqref{Eqn.115}. 
\end{proof}

Finally, we conclude this section by characterizing the ability of the proposed preconditioner $Q_{M}$ to approximately invert the Helmholtz operator $P$. First, we verify that the assumptions of Lemma \ref{Lemma.3} are indeed satisfied by the principal symbol \eqref{Eqn.021}. By assumption, the wave speed $c$ is smooth, bounded above and below, and $(c-c_{\rm o})$ has compact support. So $c$ attains a minimum and maximum which we denote by $c_{\rm min}$ and $c_{\rm max}$, respectively. The derivatives of \eqref{Eqn.021} with respect to $c$ are as follows
\begin{align}
\frac{\partial q_{-2}}{\partial c} &= \frac{2 c |\xi|^2}{\left( \omega^2 + i \omega a - c^2 |\xi|^2 \right)^2}, \nonumber \\
\frac{\partial^2 q_{-2}}{\partial c^2} &= \frac{2 |\xi|^2 \left( \omega^2 + i \omega a - c^2 |\xi|^2 \right) + 8 c^2 |\xi|^4 }{\left( \omega^2 + i \omega a - c^2 |\xi|^2 \right)^3}. \nonumber
\end{align}
Hence $| \partial^2 q_{-2} / \partial c^2 | \leq C / (1 + |\xi|^2)$ for some constant $C>0$ that depends on $\omega$, $a$ and $c$, but not on $\xi$. Hence, \eqref{Eqn.109} is satisfied for $n=-2$ and Theorem \ref{Thm.Acc} applies to our preconditioner. 

We also have that
\begin{align} 
\mathcal{E}_{\Omega} Q_M P = \mathcal{E}_{\Omega} Q P + \mathcal{E}_{\Omega} (Q_M - Q) P = I + R_{\Omega} + \mathcal{E}_{\Omega} (Q_M - Q) P \label{Eqn.ApproxInv}
\end{align}
where the error operator $R_{\Omega} = \mathcal{E}_{\Omega} Q P - I : H^s(\Omega) \to H^s(\Omega)$ is compact and the restriction operator $\mathcal{E}_{\Omega} : H^s(\mathbb{R}^d) \to H^s(\Omega)$ is bounded. Both of these operators are described above Theorem \ref{Thm.Spectral1}. According to Theorem \ref{Thm.Acc}, we also have that
\begin{align} 
\| \mathcal{E}_{\Omega} (Q_M - Q) P v \|_{H^{0}(\Omega)} \leq \frac{C}{M^2} \| P v \|_{H^{-2}(\Omega)} \leq \frac{C}{M^2} \| v \|_{H^{0}(\Omega)}  \label{Eqn.ApproxInv2}
\end{align}
for some constant $C=C(\Omega, c,a,\omega)$ independent of $M$. Hence, the norm of $\mathcal{E}_{\Omega} (Q_M - Q) P$ can be made arbitrarily small by increasing the number of interpolation points $M \to \infty$. Therefore, the interpolated preconditioned operator $\mathcal{E}_{\Omega} Q_M P$ is a small bounded perturbation of the Fredholm operator $I + R_{\Omega}$. With these preliminaries, we are able to prove the following theorem regarding the ability of the interpolated preconditioner $Q_M$ to cluster the spectrum of the operator $\mathcal{E}_{\Omega} Q_{M} P$.

\begin{theorem} \label{Thm.Spectral2}
Let $B_{\rho}$ denote a disk in the complex plane centered at $z = 1$ with arbitrarily small radius $\rho > 0$. There exists $M=M(\rho)$ large enough such that the interpolated preconditioned operator $\mathcal{E}_{\Omega} Q_{M} P : H^{0}(\Omega) \to H^{0}(\Omega)$ is Fredholm (with the same index as $\mathcal{E}_{\Omega} Q P$), and its spectrum is contained in the disk $B_{\rho}$ except for a finite number of its eigenvalues.
\end{theorem}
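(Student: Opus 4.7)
The plan is to exploit the decomposition $\mathcal{E}_\Omega Q_M P = (I + R_\Omega) + E_M$, where $E_M := \mathcal{E}_\Omega (Q_M - Q) P$ is a bounded operator whose operator norm on $H^{0}(\Omega)$ is at most $C/M^{2}$ by Theorem \ref{Thm.Acc} (see \eqref{Eqn.ApproxInv2}), and to view $\mathcal{E}_\Omega Q_M P$ as a small-norm perturbation of the Fredholm operator $I + R_\Omega$ of index zero (which is Fredholm by Theorem \ref{Thm.Spectral1}, being a compact perturbation of the identity). Since the Fredholm property and the Fredholm index are both stable under sufficiently small bounded perturbations, choosing $M$ large enough immediately gives that $\mathcal{E}_\Omega Q_M P$ is Fredholm with the same index as $\mathcal{E}_\Omega Q P$.

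For the spectral statement, I would first invoke Theorem \ref{Thm.Spectral1} to conclude that $\sigma(I + R_\Omega) \cap \{z \in \mathbb{C} : |z - 1| \geq \rho\}$ is a finite set $\{\lambda_1, \ldots, \lambda_N\}$, since the eigenvalues of $I + R_\Omega$ can accumulate only at $z = 1$. I would then choose pairwise disjoint open disks $B_{j}$ around each $\lambda_j$, also disjoint from $B_{\rho}$, and set $\Omega_\rho := \{z : |z-1| \geq \rho\} \setminus \bigcup_{j=1}^{N} B_j$. The next step is to establish a uniform bound $\|R(z, I + R_\Omega)\|_{H^0 \to H^0} \leq K_\rho$ on $\Omega_\rho$: continuity of the resolvent on the open resolvent set provides local bounds, and to upgrade these to a global bound on the unbounded region $\Omega_\rho$, I would split off the set $|z| > 2\|I + R_\Omega\|$, where the Neumann expansion yields $\|R(z, I + R_\Omega)\| \leq (|z| - \|I + R_\Omega\|)^{-1}$, and use compactness of $\Omega_\rho \cap \{|z| \leq 2\|I + R_\Omega\|\}$ to cover the remainder.

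With this uniform bound in hand, the factorization
\begin{equation*}
zI - \mathcal{E}_\Omega Q_M P = \bigl(zI - (I + R_\Omega)\bigr)\bigl(I - R(z, I+R_\Omega)\, E_M\bigr)
\end{equation*}
shows that whenever $M$ is large enough that $C/M^{2} < 1/K_\rho$, the right-hand factor is invertible by Neumann series for every $z \in \Omega_\rho$, and therefore $\sigma(\mathcal{E}_\Omega Q_M P) \subset B_\rho \cup \bigcup_j B_j$. Inside each $B_j$ I would then apply a Riesz projection argument: the contour $\partial B_j$ lies in $\Omega_\rho$, so the projection $P_j(M) := \frac{1}{2\pi i}\oint_{\partial B_j} R(z, \mathcal{E}_\Omega Q_M P)\, dz$ is well defined, and by norm continuity of this integral in the perturbation parameter it has the same finite rank as the unperturbed Riesz projection onto the generalized eigenspace of $\lambda_j$; consequently $\sigma(\mathcal{E}_\Omega Q_M P) \cap B_j$ is a finite set of eigenvalues, and summing over $j$ yields the theorem.

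The main obstacle, in my view, is the uniform resolvent bound on the unbounded set $\Omega_\rho$. The large-$|z|$ decay is elementary, but it must be carefully patched together with compactness of the truncated region and with the isolation of the finitely many exceptional eigenvalues via disks $B_j$ chosen small enough to be mutually disjoint and bounded away from $B_\rho$. Once this step is secured, the Fredholm stability and Riesz projection arguments are standard.
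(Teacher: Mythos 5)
Your proof is correct and follows essentially the same route as the paper: both start from the decomposition $\mathcal{E}_\Omega Q_M P = (I + R_\Omega) + E_M$ with $\|E_M\|_{H^0\to H^0} \leq C/M^2$ and conclude by perturbation of the Fredholm operator $I + R_\Omega$. The only difference is that the paper invokes Kato's stability theorems (Thm.~5.17 and Thm.~3.16 of Ch.~4 in the cited reference) as black boxes for the invariance of the Fredholm index and of the separated part of the spectrum, whereas you reprove these facts directly via the uniform resolvent bound on $\Omega_\rho$, the Neumann-series factorization, and the rank-preserving Riesz projections --- all of which you execute correctly.
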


\begin{proof}
This proof is a direct application of Theorem 3.16 in Chapter 4 of \cite{Kato1980}. First, we note that, as a consequence of Theorem \ref{Thm.Spectral1}, the disk $B_{\rho}$ contains all but a finite number of the eigenvalues of the Fredholm operator $\mathcal{E}_{\Omega} Q P = I + R_{\Omega}$. From \eqref{Eqn.ApproxInv}-\eqref{Eqn.ApproxInv2}, we have that 
\begin{align} 
\| \mathcal{E}_{\Omega} Q_M P - \mathcal{E}_{\Omega} Q P \|_{H^{0}(\Omega) \to H^{0}(\Omega)} \leq  \frac{C}{M^2}. \nonumber
\end{align}
Hence, for $M$ sufficiently large, we can apply \cite[Thm 5.17, Ch 4]{Kato1980} to deduce that $\mathcal{E}_{\Omega} Q_M P$ is also Fredholm (with the dame index as $\mathcal{E}_{\Omega} Q P$). Also, we apply \cite[Thm 3.16, Ch 4]{Kato1980} to assert that the disk $B_{\rho}$ contains the spectrum of $\mathcal{E}_{\Omega} Q_M P$ except for finitely many of its eigenvalues.
\end{proof}

The practical significance of Theorem \ref{Thm.Spectral2} is similar to that of Theorem \ref{Thm.Spectral1}, namely, that the interpolated preconditioner $Q_M$ possesses the ability to cluster the spectrum of $\mathcal{E}_{\Omega} Q_M P$ in the vicinity of $z = 1$ in the complex plane provided that the number of interpolation points $M$ is large enough. This property is needed for a preconditioner to accelerate the convergence of the GMRES iterative method as stated in \cite[Part VI]{Trefethen2022book}.

%%%%%%%%%%%%%%%%%%%%%%%%%%%%%%%%%%%%%%%%%%
\subsection{Absorbing layer} \label{Subsec.AbsLayer}
So far we have analyzed the proposed interpolated preconditioner $Q_M$ in the continuous setting where the Helmholtz equation is posed in all of $\mathbb{R}^d$. However, in computational practice, this unbounded domain must be truncated to a bounded domain $\Omega$ and the effect of the Sommerfeld radiation condition should be incorporated into an absorbing layer/boundary condition. Here we offer a way to incorporate an absorbing layer into the definition of the preconditioner $Q_M$. A common choice for achieving absorption is the complexification of the wave speed in the vicinity of the boundary of the truncated domain $\Omega$ \cite{Erlangga2008,Stolk2017,Acosta2024}. 
Hence, the wave speed is replaced as follows
\begin{align} 
\gamma(x) = c(x)(1 - i \zeta(x))
\label{Eqn.Complex_wave_speed}
\end{align}
where $c(x)$ is the real-valued wave speed, and $\zeta(x)$ is a real-valued, non-negative, bounded function whose support defines the absorbing layer denoted by $\Omega_{\rm AL}$. Typically, $0 \leq \zeta(x) \ll 1$. 

Recall from Subsection \ref{Subsec.Algorithm}, that because $c$ is a real-valued function, we are able to interpolate the preconditioner $Q$ over a one-dimensional interval $[c_{\rm min}, c_{\rm max}]$. We wish to preserve this \textit{univariate} interpolation property which is highly desirable from the computational point of view. Therefore, we assume that the wave speed is constant $c(x) = c_{o}$ in the absorbing layer $\Omega_{\rm AL}$. As a result, the complexified wave speed $c(x)(1 - i \zeta(x))$ has values in the complex plane 
either in the horizontal line from $c_{\rm min}$ to $c_{\rm max}$, or in the vertical line from $c_{o}$ to $c_{o}(1 + i \zeta_{\rm max})$ where $\zeta_{\rm max}$ is the maximum value of the function $\zeta$. As before, we let $\{ c_{1}, c_{2}, ..., c_{M} \} \subset [c_{\rm min}, c_{\rm max}]$ be a finite ordered set of distinct interpolation points such that $c_1 = c_{\rm min}$, $c_M = c_{\rm max}$, and there is $m_o$ such that $c_{m_o} = c_o$. Also let $\{ \varphi_{1}, ..., \varphi_{M} \}$ be an interpolating basis associated with the points $\{ c_{1}, c_{2}, ..., c_{M} \}$, that is, each $\varphi_{m} : [c_{\rm min}, c_{\rm max}] \to [0,1]$ is a function such that $\varphi_{m}(c_{n}) = 0$ for all $n \neq m$, and $\varphi_{m}(c_{m}) = 1$ for each $m=1,...,M$. 
Similarly, let $\{ \zeta_{0}, \zeta_{1}, \zeta_{2}, ..., \zeta_{\tilde{M}} \} \subset [0 , \zeta_{\rm max}]$ be a finite ordered set of distinct interpolation points such that $\zeta_0 = 0$ and $\zeta_{\tilde{M}} = \zeta_{\rm max}$ where $\tilde{M} = \tilde{M}(M)$. Also let $\{ \phi_{0}, \phi_{1}, ..., \phi_{\tilde{M}} \}$ be an interpolating basis associated with the points $\{ \zeta_{0}, \zeta_{1}, ..., \zeta_{\tilde{M}} \}$, that is, each $\phi_{m} : [- c_o \zeta_{\rm max}, 0] \to [0,1]$ is a function such that $\phi_{m}(- c_o \zeta_{n}) = 0$ for all $n \neq m$, and $\phi_{m}(- c_o \zeta_{m}) = 1$ for each $m=0,1,...,\tilde{M}$.
Using these two interpolating bases, we define the pseudodifferential interpolating symbol as
\begin{align}
q_M(\gamma,\xi) = \sum_{m=1}^{M} \varphi_{m}( \mathfrak{Re}(\gamma) )  \phi_{0}(\mathfrak{Im}(\gamma)) q(c_m,\xi) + \sum_{m=1}^{\tilde{M}} \varphi_{m_o}(\mathfrak{Re}(\gamma)) \phi_{m}( \mathfrak{Im}(\gamma) )  q( c_o (1 - i \zeta_m) ,\xi),
\label{Eqn.405}
\end{align}
and its associated pseudodifferential operator as
\begin{align}
(Q_M v)(x) &= (2 \pi)^{-d/2} \sum_{m=1}^{M}   \varphi_{m}(\mathfrak{Re}(\gamma(x))) \phi_0(\mathfrak{Im}(\gamma(x)))  \int q(c_m,\xi) \hat{v}(\xi) e^{i x \cdot \xi} d \xi \nonumber \\
& \quad  + (2 \pi)^{-d/2} \sum_{m=1}^{\tilde{M}}   \varphi_{m_o}(\mathfrak{Re}(\gamma(x)))  \phi_{m}(\mathfrak{Im}(\gamma(x))) \int q(c_o (1 - i \zeta_m),\xi) \hat{v}(\xi) e^{i x \cdot \xi} d \xi.
\label{Eqn.403}
\end{align}
where $q = q_{-2}$ is defined in \eqref{Eqn.021}.

%%%%%%%%%%%%%%%%%%%%%%%%%%%%%%%%%%%
%%%%%%%%%  NEW SECTION  %%%%%%%%%%%
%%%%%%%%%%%%%%%%%%%%%%%%%%%%%%%%%%%
\section{Numerical experiments}
\label{Section.NumExp}

In the numerical examples presented in this section, we consider a square domain $\Omega$ discretized with a regular grid with $N$ points in each coordinate direction. So there are $N^2$ evaluation points that represent the degrees of freedom. Let $P_N$ denote the second-order centered finite difference discretization of the Helmholtz operator $P$ defined by \eqref{Eqn.003} augmented by periodic boundary conditions. Also let $Q_{M,N}$ be the proposed pseudodifferential preconditioner \eqref{Eqn.103} using the symbol $q_{-2}$ from \eqref{Eqn.021}, numerically evaluated using the FFT on the same $N^2$-grid, and using $M$ interpolation terms as defined in Section \ref{Section.EvalAlgo}. So the goal is to solve the following linear system
\begin{align} \label{Eqn.MainDiscrete}
P_N u = f_N
\end{align}
where $P_N : \mathbb{C}^{N \times N} \to \mathbb{C}^{N \times N}$ and $f_N$ denotes the evaluation of \eqref{Eqn.Source} at the points on the $N^2$-grid. The preconditioned version of \eqref{Eqn.MainDiscrete} then becomes
\begin{align} \label{Eqn.MainDiscretePrecond}
(Q_{N,M} P_N) u = Q_{N,M} f_N
\end{align}
where the preconditioner $Q_{N,M}$ is described above. 

Our objective is to observe the computational complexity of the proposed preconditioner $Q_{N,M}$, its ability to precondition a discrete Helmholtz operator $P_{N}$, and observe the acceleration provided for the convergence of a GMRES solver for \eqref{Eqn.MainDiscretePrecond}. The preconditioning performance will be characterized by the following spectral condition number,
\begin{align} \label{Eqn.Cond}
\text{cond}(A) = \frac{ \max_{\lambda \in \sigma (A)} |\lambda| }{ \min_{\lambda \in \sigma (A)} |\lambda| }
\end{align}
where $\sigma(A)$ denotes the spectrum of a generic matrix $A$. Note that the spectral condition number \eqref{Eqn.Cond} characterizes the standard condition number for normal (unitarily diagonalizable) matrices such as $P_{N}$ obtained from the finite difference discretization of the Helmholtz operator. In all numerical experiments, the largest and smallest (in absolute value) eigenvalues were computed using MATLAB ``eigs" function for a matrix-free calculations using function handles.

For comparison purposes, we also define a pseudodifferential preconditioner $Q_{1}$ with a single interpolation point at the background wave speed $c_o$ as follows,
\begin{align}
(Q_1 v)(x) = (2 \pi)^{-d/2} \int \frac{\hat{v}(\xi) e^{i x \cdot \xi} }{\omega^2 + i \omega a_o - c_o^2 |\xi|^2}  d \xi,
\label{Eqn.OneInterpPoint}
\end{align}
along with its discrete counterpart $Q_{N,1}$ using the FFT for its numerical evaluation.

%%%%%%%%%%%%%%%%%%%%%%%%%%%%%%%%%%%
\subsection{Example 1: Circular inclusion}
\label{Section.Subsection1}

We consider a square domain $\Omega \subset \mathbb{R}^2$ of side $L=1$ centered at the origin. Periodic boundary conditions are assumed. 
The background medium has a wave speed $c_o = 1$. We place a circular inclusion of radius $0.05$ with wavespeed $c= c_o + \delta$. However, smooth transition is accomplished by the following definition,
\begin{align} \label{Eqn.WS}
c(x_1,x_2) = c_o + \delta H_{\eta}( 0.05 - \sqrt{ x_1^2 + x_2^2 }),
\end{align}
where $H_{\eta}(s) = 1/(1+e^{ - s / \eta})$ is a smooth version of the Heaviside function characterized by the transition parameter $\eta > 0$. The smaller $\eta$, the sharper the transition from $0$ to $1$ in $H(s)$, and consequently the sharper the transition from the background wave speed $c_o$ to the inclusion wave speed $c_o + \delta$. The source function $f$ is defined as a plane wave 
\begin{align} \label{Eqn.Source}
f(x_1,x_2) = e^{i \omega/c_o x_1}
\end{align}
with wavenumber $\omega / c_o$ where $\omega$ is the angular frequency of temporal oscillation. This experimental setup allows us to numerically test the proposed preconditioner under reproducible conditions and study its performance for a range of values for the number of interpolation points $M$, the frequency $\omega$, points per wavelength (PPW), 
the wave speed contrast parameter $\delta$, the wave speed smoothness parameter $\eta$, and the damping coefficient $a$. The points per wavelength are defined as PPW $= 2 \pi c_o N / (L \omega)$.

\paragraph{Complexity:} First, we observe the complexity of evaluating the operator $Q_{N,M}$. There results are displayed in Table \ref{tab:table1} for $N=180, 360, 720, 1440, 2880$ and $M=2, 4, 8$. Since the algorithm amounts to one FFT of the input, followed by $M$ inverse FFTs, then the assumed complexity in terms of CPU time is $T_{N} \sim M N^{d \beta} \log N$ where $d=2$ for a two-dimensional setting. Our objective is to empirically observe the order $\beta$ to numerically test the estimate obtained in Lemma \ref{Lemma.2} which dictates that $\beta=1$. An observed order of $\beta=1$ would imply that, in practice, the proposed preconditioner $Q_{N,M}$ possesses nearly linear complexity with respect to the DOF=$N^2$ in a two-dimensional setting. Table \ref{tab:table1} shows that the order $\beta$ is close to $1$ for each choice of the number of interpolation points $M$ in conformity with Lemma \ref{Lemma.2}.

\begin{table}[ht]
\centering \small
\caption {\label{tab:table1} Quantification of computational complexity through normalized CPU time for a range of grid points $N$ and interpolation nodes $M$. The model for complexity is $T_N \sim M N^{d \beta} \log N$ where $d=2$ for the two-dimensional setting. For each fixed $M$, the observed order $\beta$ is computed as the slope of a linear regression fitted through the CPU times $T_N$ versus $N$ in the log-log scale. A similar regression for all the values on the table renders an observed order $\beta =1.03$. The CPU times displayed here are normalized to the CPU time for $N=180$ and $M=2$.} 
\begin{tabular}{lrrrrrr}
\hline
\hline
$M$ & \multicolumn{5}{c}{Normalized CPU Time} &  Order $\beta$ \\
\cline{2-6} 
 &  $N = 180$  & $N = 360$  & $N = 720$ & $N = 1440$ & $N = 2880$ &  \\
\cline{2-6} 

$2$ & $1.00 \times 10^0$ & $3.23\times 10^0$ & $1.43 \times 10^1$ & $6.05 \times 10^1$ & $2.62 \times 10^2$ & $1.02$ \\
 
$4$ & $1.62 \times 10^0$ & $6.82 \times 10^0$ & $2.67 \times 10^1$ & $1.17 \times 10^2$ & $5.06 \times 10^2$ & $1.03$ \\
 
$8$ & $3.18 \times 10^0$ & $1.23 \times 10^1$ & $5.45 \times 10^1$ & $2.28 \times 10^2$ & $9.98 \times 10^2$ & $1.04$ \\

\hline
\hline
\end{tabular}
\end{table}

\paragraph{Frequency:} Here we explore the spectral behavior of the discrete operators $P_N$ and $Q_{N,M} P_{N}$, for a range of frequencies $\omega$ and interpolation points $M$, while holding the smoothness parameter $\eta = 1/800$, wave speed contrast parameter $\delta = 4$, damping coefficient $a=20$, and points per wavelength PPW = $12$ fixed. Table \ref{tab:tableFrequency} displays the results from these experiments. 
We observe that $P_{N}$ (finite difference discretization of the Helmholtz operator) is very ill-conditioned (cond($P_N$) $\sim 10^{12} - 10^{17}$) and its condition number grows with the frequency $\omega$. The preconditioner $Q_{N,1}$ with a single interpolation point for the background wave speed $c_o$ is able to reduce the condition number by several order of magnitude (cond($Q_{N,1} P_N$) $\sim 10^{7} - 10^{8}$) and behaves relatively stably with the frequency $\omega$. 
The condition number of $Q_{N,M} P_{N}$ for $M \geq 2$ is again several orders of magnitude smaller (cond($Q_{N,M} P_N$) $\sim 10^1 - 10^2$).
Additionally, we observe that the condition number of $Q_{N,M} P_{N}$ decreases as the frequency $\omega$ increases. This behavior conforms with Remark \ref{Thm.Spectral1} stating that the preconditioner $Q$ inverts the Helmholtz operator $P$ up to an error of pseudodifferential order $-1$. This means that the preconditioner becomes more effective for problems whose solutions are highly oscillatory ($\omega \to \infty$) because the error operator behaves like a low-pass filter.

\begin{table}[H]
\centering \small
\caption {\label{tab:tableFrequency} Spectral condition number \eqref{Eqn.Cond} for the discrete operators $P_{N}$ and $Q_{N,M} P_{N}$ for increasing frequencies $\omega$ and interpolation points $M$. For comparison, the preconditioner $Q_{N,1}$ with a single interpolation point is also tested. 
In all cases, the wave speed smoothness $\eta = 1/800$, wave speed contrast parameter $\delta = 4$, damping coefficient $a=20$, and points per wavelength PPW = $12$ are fixed.} 
\begin{tabular}{rlllll}
\hline
\hline
 &  $\omega = 20 \pi$  & $\omega = 40 \pi$  & $\omega = 80 \pi$ & $\omega = 160 \pi$ & $\omega = 320 \pi$  \\

\hline

$P_N$ & $7.95 \times 10^{12}$ & $1.31 \times 10^{14}$ & $2.11 \times 10^{15}$ & $3.39 \times 10^{16}$ & $5.42 \times 10^{17}$ \\

$Q_{N,1} P_N$ & $1.21 \times 10^{7}$ & $2.43 \times 10^{7}$ & $5.69 \times 10^{7}$ & $1.23 \times 10^{8}$ & $2.37 \times 10^{8}$ \\
 
$Q_{N,2} P_N$ & $7.28 \times 10^{2}$ & $4.23 \times 10^{2}$ & $1.98 \times 10^{2}$ & $7.79 \times 10^{1}$ & $4.79 \times 10^{1}$ \\
 
$Q_{N,4} P_N$ & $7.00 \times 10^{2}$ & $2.07 \times 10^{2}$ & $1.01 \times 10^{2}$ & $4.32 \times 10^{1}$ & $2.16 \times 10^{1}$ \\

$Q_{N,8} P_N$ & $5.25 \times 10^{2}$ & $1.83 \times 10^{2}$ & $7.84 \times 10^{1}$ & $3.27 \times 10^{1}$ & $1.65 \times 10^{1}$ \\

$Q_{N,16} P_N$ & $4.91 \times 10^{2}$ & $1.40 \times 10^{2}$ & $7.11 \times 10^{1}$ & $3.05 \times 10^{1}$ & $1.51 \times 10^{1}$ \\

$Q_{N,32} P_N$ & $4.91 \times 10^{2}$ & $1.32 \times 10^{2}$ & $6.97 \times 10^{1}$ & $3.17 \times 10^{1}$ & $1.35 \times 10^{1}$ \\

\hline
\hline
\end{tabular}
\end{table}

\paragraph{Points per wavelength:} Now we explore the spectral behavior of the discrete operators $P_N$ and $Q_{N,M} P_{N}$, for a range of points per wavelength PPW and interpolation points $M$, for fixed smoothness parameter $\eta = 1/800$, wave speed contrast parameter $\delta = 4$, damping coefficient $a=20$, and frequency $\omega = 80 \pi$. Table \ref{tab:tablePPW} displays the results of these experiments. We observe a rapid increase in the condition number for the unconditioned matrix $P_N$ which is tamed by the preconditioners $Q_{N,M}$. The reduction in the condition number when using a single interpolation point (at the background wave speed $c_o$) is considerable, but using $2$ or more interpolation points brings down the condition number by several orders of magnitude more until a floor is reached. We note the robustness of the preconditioner, implying that when $Q_{N,M}$ is applied to $P_N$, the condition number remains practically constant as the PPW increase.

\begin{table}[H]
\centering \small
\caption {\label{tab:tablePPW} Spectral condition number \eqref{Eqn.Cond} for the discrete operators $P_{N}$ and $Q_{N,M} P_{N}$ for 
points per wavelength PPW and interpolation points $M$. For comparison, the preconditioner $Q_{N,1}$ with a single interpolation point is also tested. 
In all cases, the wave speed smoothness $\eta = 1/800$, wave speed contrast parameter $\delta = 4$, damping coefficient $a=20$, and frequency $\omega = 80 \pi$ are fixed.} 
\begin{tabular}{rlllll}
\hline
\hline
 &  PPW $= 4$  & PPW $= 6$  & PPW $= 12$ & PPW $= 24$ & PPW $= 48$  \\

\hline

$P_N$ & $7.62 \times 10^{12}$ & $1.30 \times 10^{14}$ & $2.11 \times 10^{15}$ & $3.39 \times 10^{16}$ & $5.43 \times 10^{17}$ \\

$Q_{N,1} P_N$ & $3.74 \times 10^{7}$ & $5.02 \times 10^{7}$ & $5.69 \times 10^{7}$ & $5.90 \times 10^{7}$ & $5.96 \times 10^{7}$ \\
 
$Q_{N,2} P_N$ & $6.91 \times 10^{1}$ & $1.37 \times 10^{2}$ & $1.98 \times 10^{2}$ & $2.19 \times 10^{2}$ & $2.25 \times 10^{2}$ \\
 
$Q_{N,4} P_N$ & $4.74 \times 10^{1}$ & $8.32 \times 10^{1}$ & $1.01 \times 10^{2}$ & $1.09 \times 10^{2}$ & $1.12 \times 10^{2}$ \\

$Q_{N,8} P_N$ & $4.50 \times 10^{1}$ & $6.40 \times 10^{1}$ & $7.84 \times 10^{1}$ & $8.20 \times 10^{1}$ & $8.35 \times 10^{1}$ \\

$Q_{N,16} P_N$ & $4.12 \times 10^{1}$ & $5.90 \times 10^{1}$ & $7.11 \times 10^{1}$ & $7.62 \times 10^{1}$ & $7.79 \times 10^{1}$ \\

$Q_{N,32} P_N$ & $4.31 \times 10^{1}$ & $6.08 \times 10^{1}$ & $6.97 \times 10^{1}$ & $7.40 \times 10^{1}$ & $7.57 \times 10^{1}$ \\

\hline
\hline
\end{tabular}
\end{table}

\paragraph{Wave speed contrast:}
Now we test the performance of the preconditioner $Q_{N,M}$ under a range of values for the wave speed contrast parameter $\eta$ and its interaction with the number of interpolation points $M$. Recall that the background wave speed is $c_o = 1$ and the circular inclusion has wave speed $c = c_o + \delta$ as defined in \eqref{Eqn.WS} with a smooth transition characterized by a fixed parameter $\eta = 1/800$. The frequency $\omega = 80 \pi$, damping coefficient $a=20$, and points per wavelength PPW $=12$ are fixed too. 
Table \ref{tab:tableContrast} displays the spectral condition number for $Q_{N,M} P_{N}$ for these experiments. First, we observe that the larger the contrast in the wave speed, the larger the condition number of the discrete Helmholtz operator $P_N$, and the more need there is for the preconditioner. We also note that, for each fixed contrast parameter $\delta$, the preconditioner improves as the number of interpolation points $M$ increases but a floor on its performance is quickly reached.  

\begin{table}[H]
\centering \small
\caption {\label{tab:tableContrast} Spectral condition number \eqref{Eqn.Cond} for the discrete operators $P_{N}$ and $Q_{N,M} P_{N}$ for various numbers of interpolation points $M$ and wave speed contrast parameters $\delta$. For comparison, the preconditioner $Q_{N,1}$ with a single interpolation point is also tested. 
In all cases, the wave speed smoothness $\eta = 1/800$, frequency $\omega = 80 \pi$, damping coefficient $a=20$, and points per wavelength PPW = $12$ are fixed.} 
\begin{tabular}{rlllll}
\hline
\hline
 &  $\delta = 1$  & $\delta = 2$  & $\delta = 4$ & $\delta = 8$ & $\delta = 16$  \\

\hline

$P_N$ & $5.33 \times 10^{13}$ & $2.72 \times 10^{14}$ & $2.11 \times 10^{15}$ & $2.22 \times 10^{16}$ & $2.83 \times 10^{17}$ \\

$Q_{N,1} P_N$ & $9.70 \times 10^5$ & $6.47\times 10^6$ & $5.69 \times 10^7$ & $6.28 \times 10^8$ & $8.14 \times 10^9$ \\
 
$Q_{N,2} P_N$ & $9.54 \times 10^0$ & $3.22\times 10^1$ & $1.98 \times 10^2$ & $1.90 \times 10^3$ & $2.37 \times 10^4$ \\
 
$Q_{N,4} P_N$ & $1.01 \times 10^1$ & $2.60\times 10^1$ & $1.01 \times 10^2$ & $5.59 \times 10^2$ & $6.31 \times 10^3$ \\

$Q_{N,8} P_N$ & $9.86 \times 10^0$ & $2.35\times 10^1$ & $7.84 \times 10^1$ & $2.71 \times 10^2$ & $1.55 \times 10^3$ \\

$Q_{N,16} P_N$ & $9.66 \times 10^0$ & $2.34\times 10^1$ & $7.11 \times 10^1$ & $2.36 \times 10^2$ & $1.08 \times 10^3$ \\

$Q_{N,32} P_N$ & $9.64 \times 10^0$ & $2.33\times 10^1$ & $6.97 \times 10^1$ & $2.59 \times 10^2$ & $1.06 \times 10^3$ \\

\hline
\hline
\end{tabular}
\end{table}

\paragraph{Wave speed smoothness:} Here we test the performance of the preconditioner $Q_{N,M}$ for a range of values for the smoothness parameter $\eta$ and its interaction with the number of interpolation points $M$. The other parameters $\delta = 4$, $\omega = 80 \pi$, $a=20$, and PPW $= 12$ are fixed. The results from these experiments are shown in Table \ref{tab:tableSmoothness}. As expected from the mathematical theory of pseudodifferential operators and the accuracy of interpolation, we observe that the proposed preconditioner $Q_{N,M}$ is more effective when the wave speed profile is smoother. As before, when using a single interpolation point (at the background wave speed $c_o$), the reduction in the condition number is significant, but using $2$ or more interpolation points brings down the condition number by several orders of magnitude more.

\begin{table}[H]
\centering \small
\caption {\label{tab:tableSmoothness} Spectral condition number \eqref{Eqn.Cond} for the discrete operators $P_{N}$ and $Q_{N,M} P_{N}$ for various numbers of interpolation points $M$ and wave speed smoothness parameters $\eta$. For comparison, the preconditioner $Q_{N,1}$ with a single interpolation point is also tested. In all cases, the wave speed contrast $\delta = 4$, frequency $\omega = 80 \pi$, damping coefficient $a=20$, and points per wavelength PPW = $12$ are fixed.} 
\begin{tabular}{rlllll}
\hline
\hline
 &  $\eta = 1/200$  & $\eta = 1/400$  & $\eta = 1/800$ & $\eta = 1/1600$ & $\eta = 1/3200$  \\

\hline

$P_N$ & $2.10 \times 10^{15}$ & $2.11 \times 10^{15}$ & $2.11 \times 10^{15}$ & $2.11 \times 10^{15}$ & $2.11 \times 10^{15}$ \\

$Q_{N,1} P_N$ & $2.09 \times 10^{7}$ & $4.11 \times 10^{7}$ & $5.69 \times 10^{7}$ & $6.44 \times 10^{7}$ & $6.72 \times 10^{7}$ \\
 
$Q_{N,2} P_N$ & $8.15 \times 10^{1}$ & $6.87 \times 10^{1}$ & $1.98 \times 10^{2}$ & $6.87 \times 10^{2}$ & $1.76 \times 10^{3}$ \\
 
$Q_{N,4} P_N$ & $7.93 \times 10^{0}$ & $3.08 \times 10^{1}$ & $1.01 \times 10^{2}$ & $3.03 \times 10^{2}$ & $9.80 \times 10^{2}$ \\

$Q_{N,8} P_N$ & $9.55 \times 10^{0}$ & $2.43 \times 10^{1}$ & $7.84 \times 10^{1}$ & $2.13 \times 10^{2}$ & $5.77 \times 10^{2}$ \\

$Q_{N,16} P_N$ & $9.41 \times 10^{0}$ & $2.36 \times 10^{1}$ & $7.11 \times 10^{1}$ & $2.24 \times 10^{2}$ & $5.83 \times 10^{2}$ \\

$Q_{N,32} P_N$ & $8.61 \times 10^{0}$ & $2.50 \times 10^{1}$ & $6.97 \times 10^{1}$ & $1.92 \times 10^{2}$ & $4.95 \times 10^{2}$ \\

\hline
\hline
\end{tabular}
\end{table}

\paragraph{Damping:} We also test the performance of the preconditioner $Q_{N,M}$ for a range of values for the damping coefficient $a$ and number of interpolation points $M$. The other parameters $\delta = 4$, $\eta = 1/800$, $\omega = 80 \pi$, and PPW $= 12$ are fixed. The results from these experiments are displayed in Table \ref{tab:tableDamping}. We observe that the smaller the damping coefficient, the less effective the preconditioner $Q_{N,M}$. However, over this range of value for the damping coefficient, the preconditioner seems to be quite robust.

\begin{table}[h]
\centering \small
\caption {\label{tab:tableDamping} Spectral condition number \eqref{Eqn.Cond} for the discrete operators $P_{N}$ and $Q_{N,M} P_{N}$ for various interpolation points $M$ and damping coefficients $a$. For comparison, the preconditioner $Q_{N,1}$ with a single interpolation point is also tested. In all cases, the wave speed contrast $\delta = 4$, wave speed smoothness $\eta = 1/800$, frequency $\omega = 80 \pi$, and points per wavelength PPW = $12$ are fixed.} 
\begin{tabular}{rlllll}
\hline
\hline
 &  $a = 5$  & $a = 10$  & $a = 20$ & $a = 40$ & $a = 80$  \\

\hline

$P_N$ & $2.11 \times 10^{15}$ & $2.11 \times 10^{15}$ & $2.11 \times 10^{15}$ & $2.11 \times 10^{15}$ & $2.11 \times 10^{15}$ \\

$Q_{N,1} P_N$ & $8.92 \times 10^{7}$ & $6.83 \times 10^{7}$ & $5.69 \times 10^{7}$ & $4.12 \times 10^{7}$ & $2.47 \times 10^{7}$ \\
 
$Q_{N,2} P_N$ & $2.74 \times 10^{2}$ & $2.42 \times 10^{2}$ & $1.98 \times 10^{2}$ & $1.41 \times 10^{2}$ & $8.84 \times 10^{1}$ \\
 
$Q_{N,4} P_N$ & $1.69 \times 10^{2}$ & $1.25 \times 10^{2}$ & $1.01 \times 10^{2}$ & $8.09 \times 10^{1}$ & $5.81 \times 10^{1}$ \\

$Q_{N,8} P_N$ & $9.99 \times 10^{1}$ & $8.40 \times 10^{1}$ & $7.84 \times 10^{1}$ & $6.61 \times 10^{1}$ & $4.81 \times 10^{1}$ \\

$Q_{N,16} P_N$ & $8.49 \times 10^{1}$ & $7.81 \times 10^{1}$ & $7.11 \times 10^{1}$ & $6.07 \times 10^{1}$ & $4.62 \times 10^{1}$ \\

$Q_{N,32} P_N$ & $8.30 \times 10^{1}$ & $7.54 \times 10^{1}$ & $6.97 \times 10^{1}$ & $6.08 \times 10^{1}$ & $4.61 \times 10^{1}$ \\

\hline
\hline
\end{tabular}
\end{table}

\paragraph{GMRES implementation:} Finally we also report some numerical results for solving the Helmholtz equation with and without the proposed preconditioner \eqref{Eqn.103} using the GMRES iterative method. MATLAB ``gmres'' function was employed for matrix-free calculations using function handles. The restart parameter was set equal to $10$. For the problem described by \eqref{Eqn.WS}-\eqref{Eqn.Source}, the evolution of the residuals over the GMRES iterations are illustrated in Figure \ref{Fig.Solution_u} for frequencies $\omega = 400 \pi$, $\omega = 600 \pi$ and $\omega = 800 \pi$, which respectively corresponds to $200$, $300$ and $400$ wavelengths across the domain. In all cases, the damping coefficient $a=20$, the wave speed contrast $\delta = 1$, the wave speed smoothness $\eta = 1/800$, and
the number of interpolation points is $M=8$ are fixed. The number of points per wavelength is PPW = $12$ which leads to $N=2400$, $N=3600$ and $N=4800$, respectively. Hence, DOF = $5.760 \times 10^6$, DOF = $1.296 \times 10^7$ and DOF = $2.304 \times 10^7$, respectively.

We observe from Figure \ref{Fig.Solution_u} that applying the GMRES method naively to \eqref{Eqn.MainDiscrete} leads to a very slow convergence pattern. By contrast, the application of the GMRES method to the preconditioned system \eqref{Eqn.MainDiscretePrecond} leads to much faster convergence. In fact, at $100$ iterations, the residual is at least $7$ orders of magnitude smaller when the proposed preconditioner is employed. For reference, the amplitude of the numerical solutions obtained after $100$ iterations are displayed in Figure \ref{Fig.Solution_u}.

\begin{figure}[H]
\centering
\captionsetup{font=small}
\subfloat[Amplitude of the solution for $\omega = 400 \pi$]{
\includegraphics[height=0.38 \textwidth, trim = 0 0 0 0, clip]{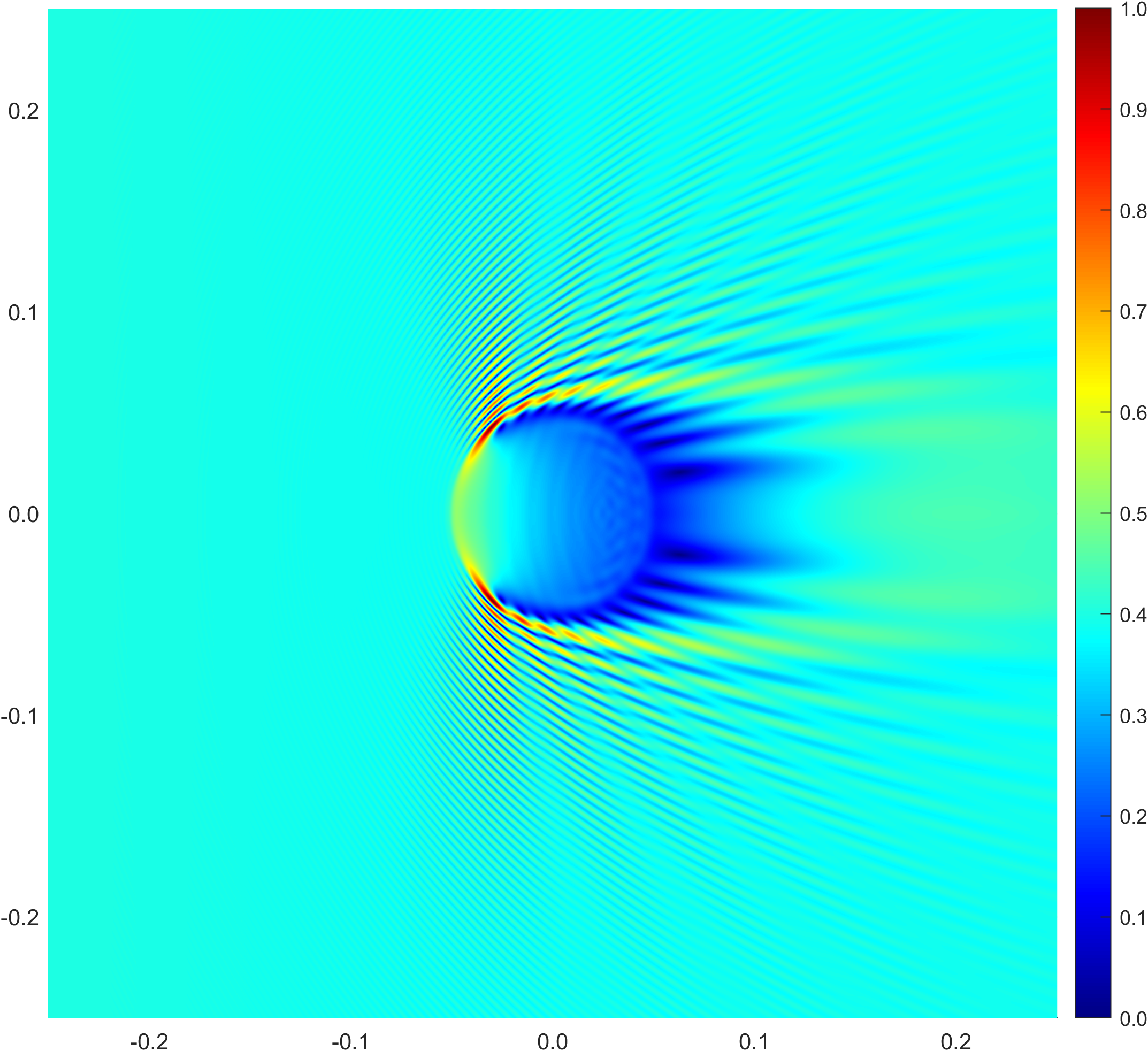}
\label{subfig:Ampl_200}
}
\subfloat[GMRES residuals for $\omega = 400 \pi$]{
\includegraphics[height=0.38 \textwidth, trim = 0 0 0 0, clip]{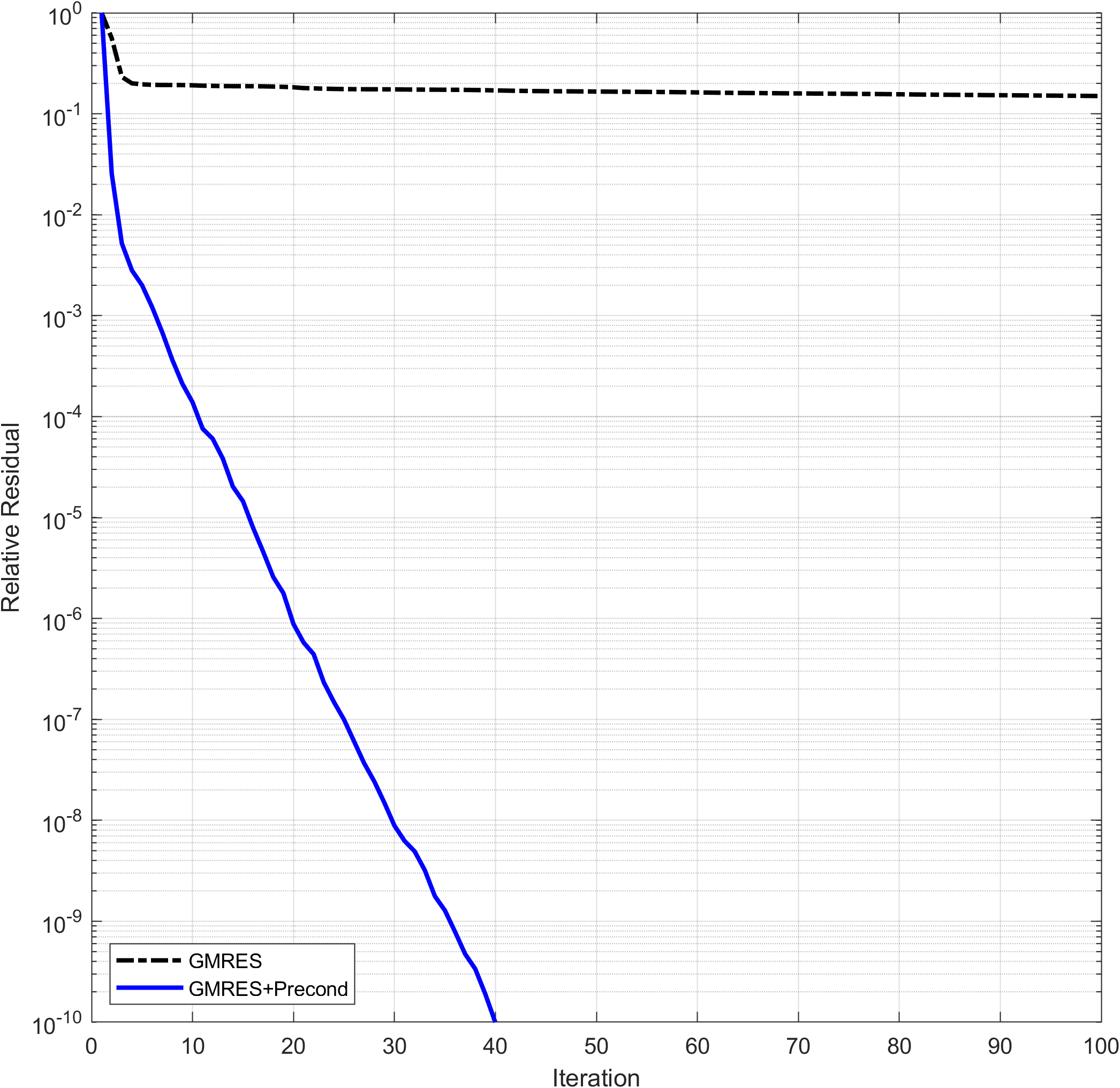}
\label{subfig:GMRES_200}
} \\
\subfloat[Amplitude of the solution for $\omega = 600 \pi$]{
\includegraphics[height=0.38 \textwidth, trim = 0 0 0 0, clip]{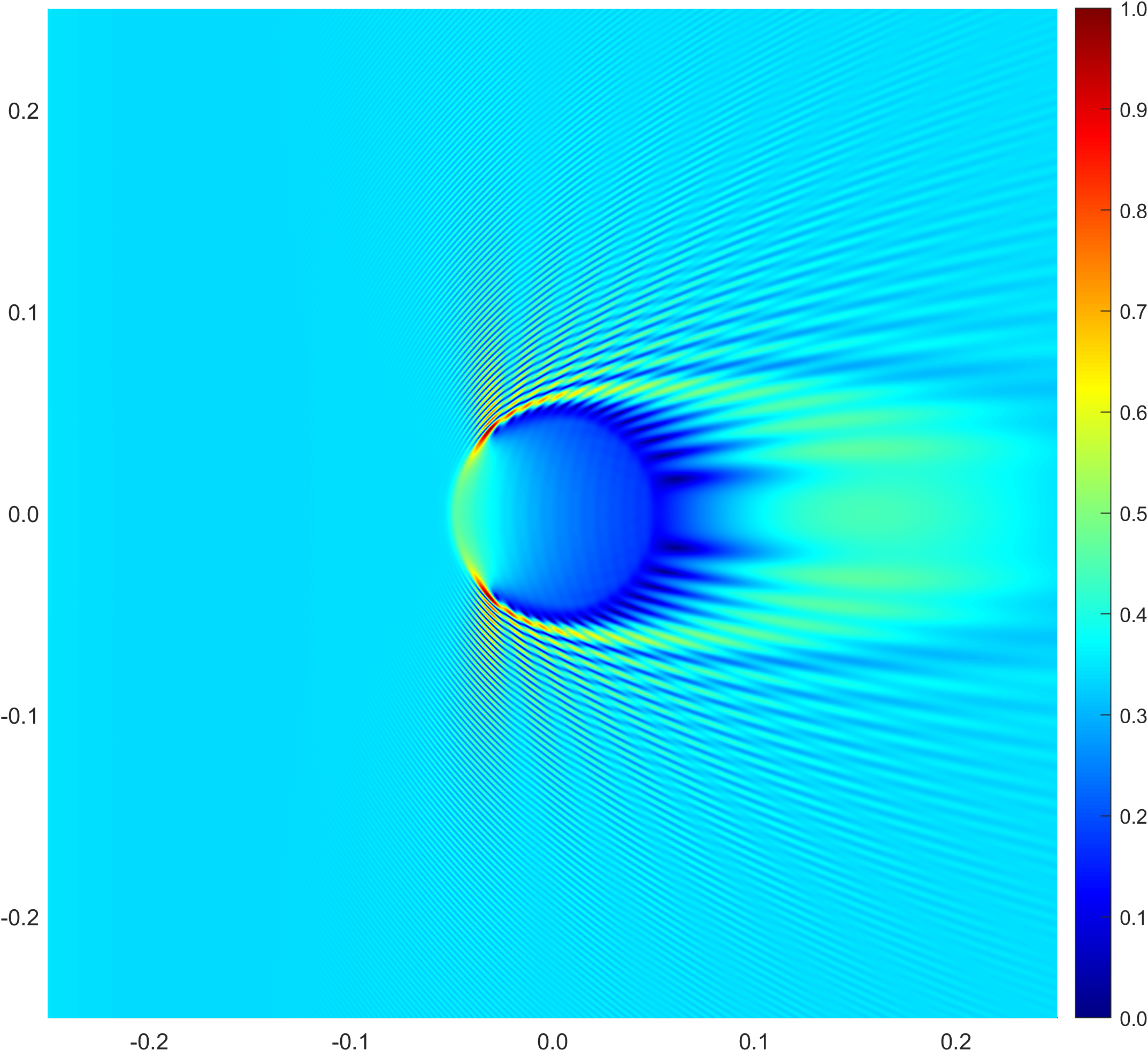}
\label{subfig:Ampl_300}
}
\subfloat[GMRES residuals for $\omega = 600 \pi$]{
\includegraphics[height=0.38 \textwidth, trim = 0 0 0 0, clip]{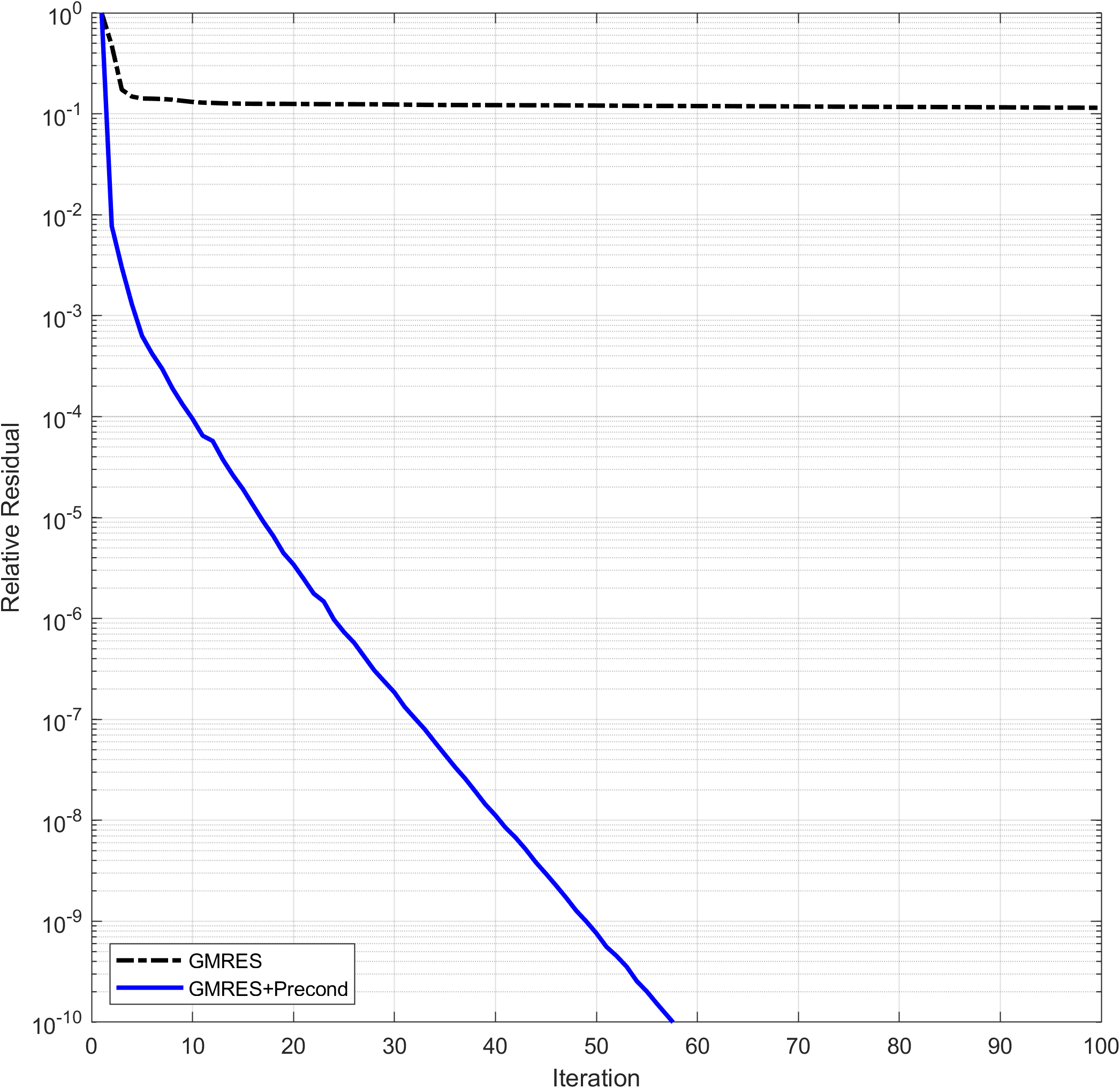}
\label{subfig:GMRES_300}
} \\
\subfloat[Amplitude of the solution for $\omega = 800 \pi$]{
\includegraphics[height=0.38 \textwidth, trim = 0 0 0 0, clip]{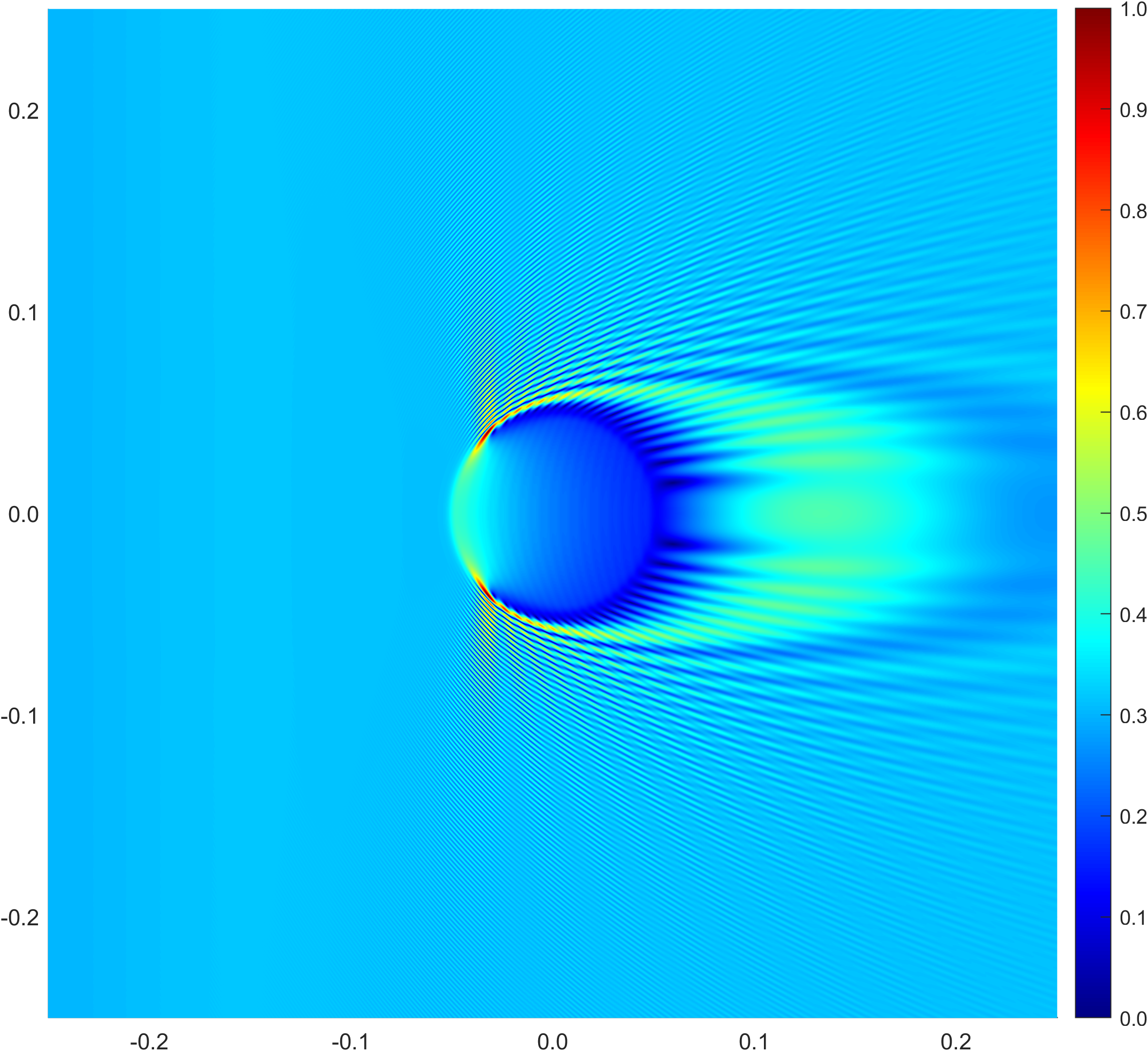}
\label{subfig:Ampl_400}
}
\subfloat[GMRES residuals for $\omega = 800 \pi$]{
\includegraphics[height=0.38 \textwidth, trim = 0 0 0 0, clip]{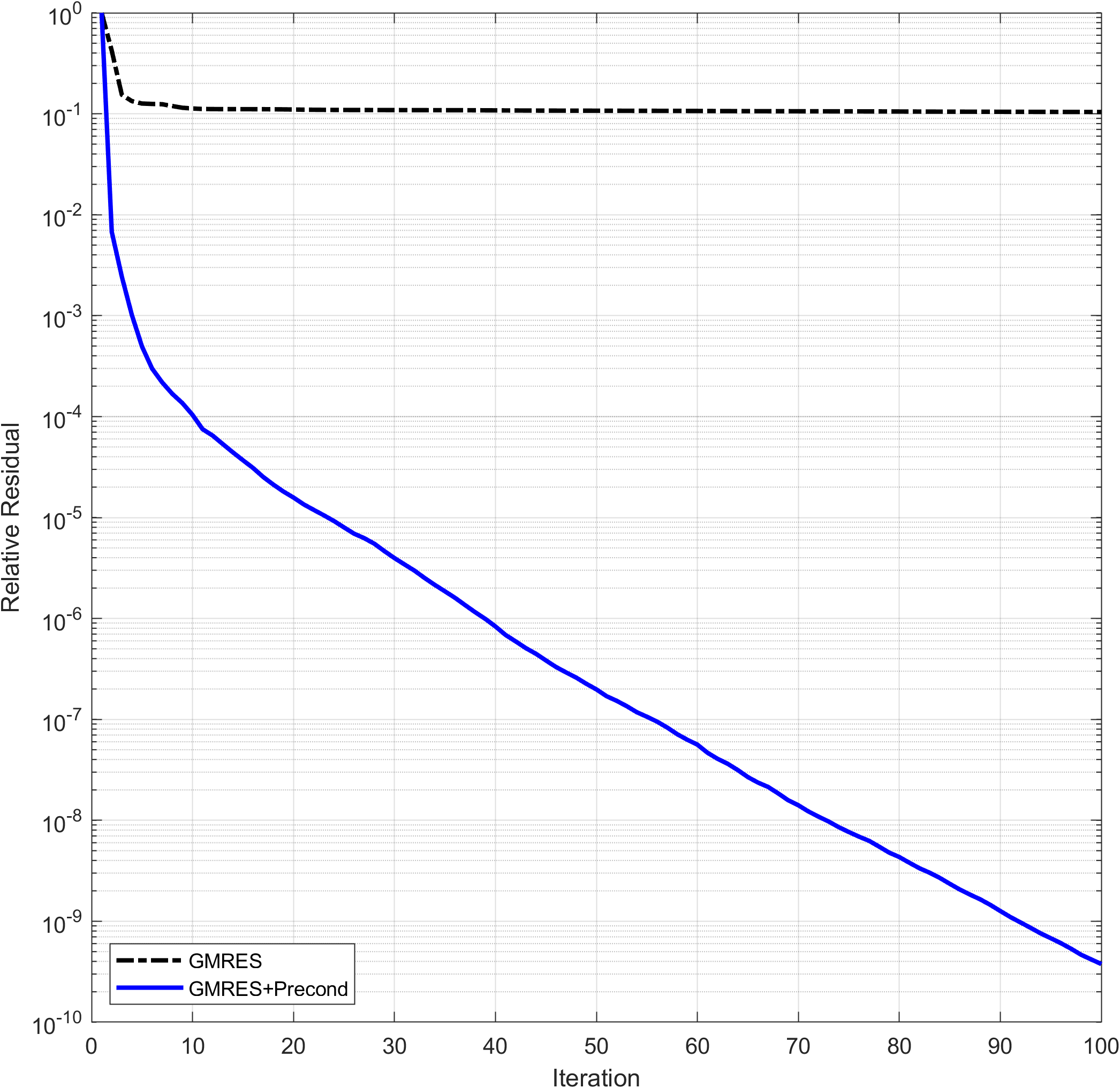}
\label{subfig:GMRES_400}
}
\caption{Amplitude of the solution $u$ (normalized in the $L^{\infty}$-norm) and relative residual for the first 100 iterations of the GMRES method applied to \eqref{Eqn.MainDiscrete} and to \eqref{Eqn.MainDiscretePrecond} for frequencies $\omega=400 \pi$, $\omega = 600 \pi$ and $\omega=800 \pi$, respectively. In all cases, the GMRES+Predonditioner method reaches residuals more than $7$ orders of magnitude smaller than the GMRES method at the $100$th iteration. The wave speed profile $c$ and source $f$ are defined by \eqref{Eqn.WS} (using $\delta = 1$ and $\eta = 1/800$) and \eqref{Eqn.Source}, respectively.}
\label{Fig.Solution_u}
\end{figure}

\bigskip

\subsection{Example 2: Human head phantom and absorbing layer}
\label{Section.Subsection2}

In this subsection we apply the proposed preconditioner to solve the Helmholtz equation in a domain whose acoustic properties conform to those of a human head as defined by a synthetic phantom available as supplemental material from \cite{Aubry2022}. The spatial dimensions of the phantom are scaled in order to fit in a square domain $\Omega \subset \mathbb{R}^2$ of side $L=1$ centered at the origin. Periodic boundary conditions are assumed at the sides of the square $\Omega$, but an absorbing layer is introduced to enforce the outgoing behavior of the waves. This absorbing layer follows the setup described in Subsection \ref{Subsec.AbsLayer}. Specifically, the function $\zeta$ defining the absorbing layer is given by
\begin{align} \label{Eqn.Zeta}
\zeta(x_1,x_2) = \begin{cases}
			0, & \text{if $\sqrt{x_1^2 + x_2^2} < 0.375 $,}\\
            0.4 (\sqrt{x_1^2 + x_2^2} - 0.375), & \text{if $\sqrt{x_1^2 + x_2^2} \geq 0.375 $.}
		 \end{cases}
\end{align}
The background medium has a wave speed $c_o = 1$ and damping coefficient $a_{o} = 20.9$. The human head phantom also contains a cross section of the skull whose wave speed is $c_{\rm skull} = 6.71$ and damping coefficient $a_{\rm skull} = 258$. The soft tissues representing the brain matter and fluids contained by the skull are assumed to have a uniform wave speed $c_{\rm brain} = 4.55$ and damping coefficient $a_{\rm brain} = 10.7$.

The source function $f$ is defined as a plane wave 
\begin{align} \label{Eqn.Source2}
f = - \left( \omega^2 \frac{(c_o^2 - c^2)}{c_o^2} (1 + i a_o/\omega) + i \omega (a - a_o) \right) u_{\rm inc}
\end{align}
where $\omega$ is the angular frequency of temporal oscillation, and the incident wave field $u_{\rm inc}$ is given by a Gaussian beam traveling downward (negative direction on the $x_2$-axis),
\begin{align} 
& u_{\rm inc} = \frac{W_o}{W(x_2 - x_{2,o})} \exp \left( - \left[\frac{x_1 - x_{1,o}}{W(x_2 - x_{2,o})} \right]^2 \right)  \exp \left( - i k \left[ x_2 - x_{2,o} + \frac{1}{2}\kappa(x_2)(x_1 - x_{1,o})^2  \right] \right)  \label{Eqn.GaussianBeam1} \\ & \text{where} \quad W(x_2) = W_o \sqrt{1 + \left(\frac{x_2}{x_{\rm R}}\right)^2}, \quad k = \frac{\omega}{c_o} \left[ 1 + i \frac{a_o}{\omega} \right]^{1/2}, \quad \text{and} \quad \kappa(x_2) = \frac{x_2}{ x_2^2 + x_{\rm R}^2}. \label{Eqn.GaussianBeam2}
\end{align}
Here, the point $(x_{1,o},x_{2,o})=(0, 0.3)$ is the focus of the beam and $W_o = 0.01$ is its waist, $c_o$ is the background wave speed, $a_o$ is the background damping coefficient, $\kappa(y)$ is the curvature of the beam, and $y_{\rm R} = \omega W_o^2 / (2 c_o)$ is called the Rayleigh range which is the point at which the curvature of the beam is the largest. The source $f$ given by \eqref{Eqn.Source2} in terms of the incident field $u_{\rm inc}$ defined by \eqref{Eqn.GaussianBeam1}-\eqref{Eqn.GaussianBeam2} corresponds to a scattering problem for an acoustic beam propagating in the background medium and impinging on the model of the human head. Therefore, the solution to \eqref{Eqn.001} for this source $f$ corresponds to an acoustic field scattered by the human head acting as a penetrable obstacle. The (real-valued) wave speed $c$ and the function $\zeta$ defining the absorbing layer are illustrated in Figure \ref{Fig.Ex2_Media}

\begin{figure}[H]
\centering
\captionsetup{font=small}
\subfloat[Real-valued wave speed $c$ ]{
\includegraphics[height=0.38 \textwidth, trim = 0 0 0 0, clip]{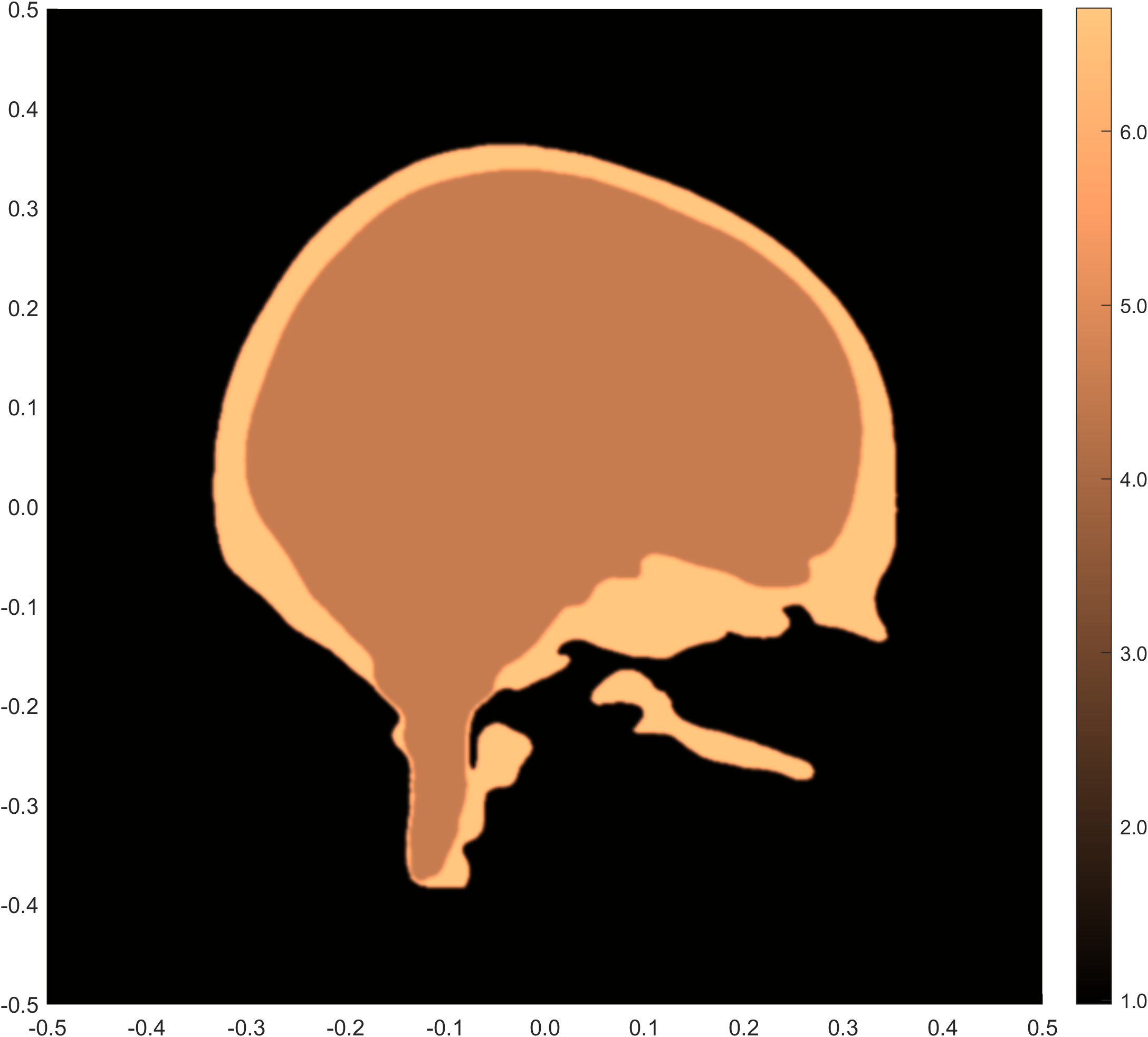}
\label{subfig:Real_c}
}
\subfloat[Function $\zeta$ for absorbing layer]{
\includegraphics[height=0.38 \textwidth, trim = 0 0 0 0, clip]{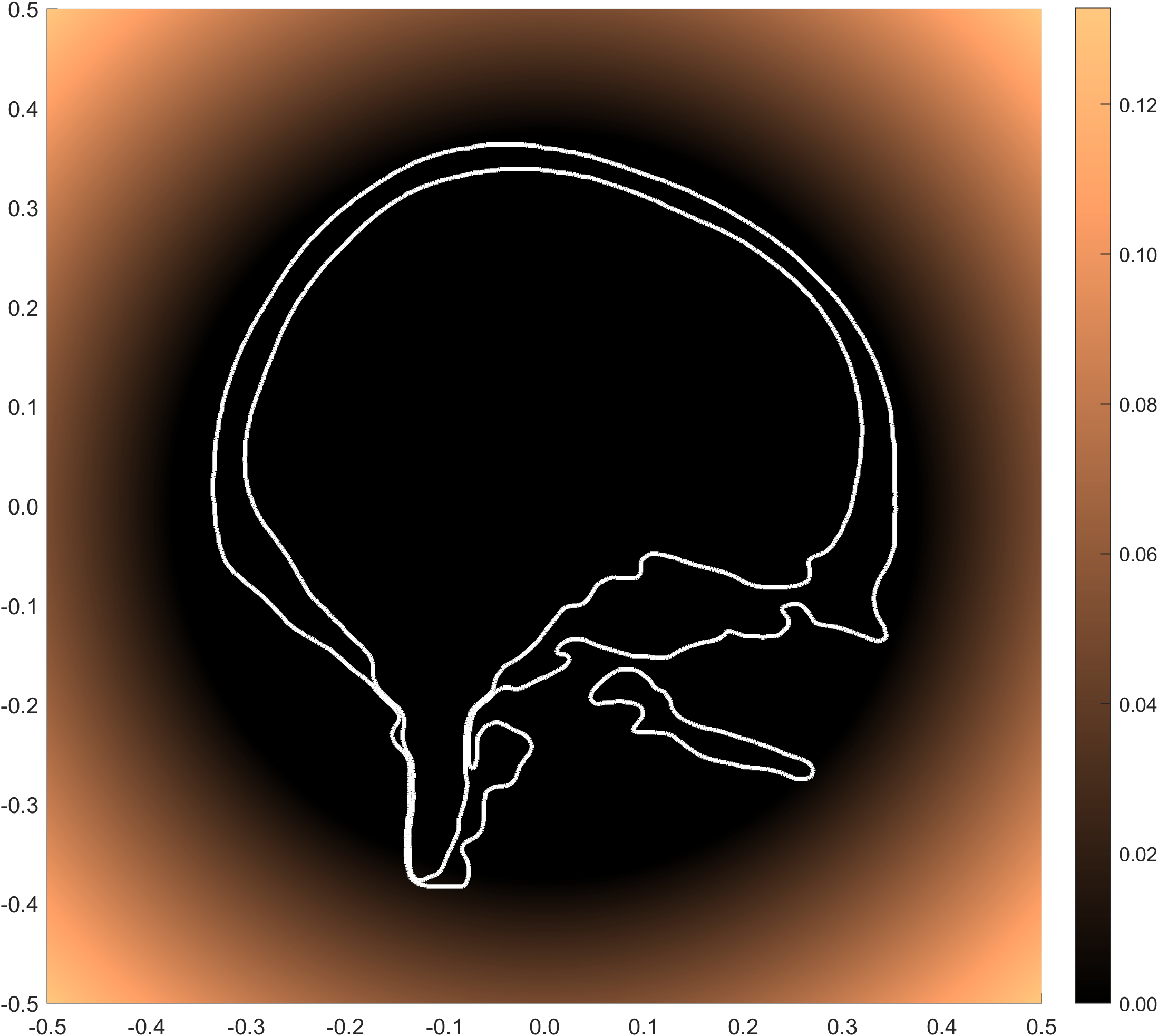}
\label{subfig:Zeta}
}
\caption{(a) Wave speed profile $c$ from the supplemental material of \cite{Aubry2022}. (b) Function $\zeta$ for the absorbing layer given by \eqref{Eqn.Zeta}. The skull contour is shown for reference. The complex-valued wave speed is defined by \eqref{Eqn.Complex_wave_speed}.}
\label{Fig.Ex2_Media}
\end{figure}

For the problem described by \eqref{Eqn.Zeta}-\eqref{Eqn.GaussianBeam2}, the evolution of the residuals over the GMRES iterations are illustrated in Figure \ref{Fig.Ex2} for frequencies $\omega = 400 \pi$, $\omega = 600 \pi$ and $\omega = 800 \pi$, which respectively corresponds to $200$, $300$ and $400$ wavelengths across the domain. In all cases, the the number of interpolation points is $M=4$ for the real values of the wave speed and $\tilde{M} = 3$ for the imaginary values of the wave speed (absorbing layer). The number of points per wavelength is PPW = $12$ which leads to $N=2400$, $N=3600$ and $N=4800$, respectively. Hence, DOF = $5.760 \times 10^6$, DOF = $1.296 \times 10^7$ and DOF = $2.304 \times 10^7$, respectively. As before, we used the MATLAB ``gmres'' function for matrix-free calculations using function handles. The restart parameter was set equal to $10$.

We observe from Figure \ref{Fig.Ex2} that applying the GMRES method naively to \eqref{Eqn.MainDiscrete} leads to a slow convergence pattern and that the application of the GMRES method to the preconditioned system \eqref{Eqn.MainDiscretePrecond} improves the convergence considerably. In fact, at $100$ iterations, the residual is at least $8$ orders of magnitude smaller when the proposed preconditioner is employed. For reference, the (log base 10) amplitude of the numerical solutions obtained after $100$ iterations are displayed in Figure \ref{Fig.Ex2}.

\begin{figure}[H]
\centering
\captionsetup{font=small}
\subfloat[Log-Amplitude for $\omega = 400 \pi$]{
\includegraphics[height=0.38 \textwidth, trim = 0 0 0 0, clip]{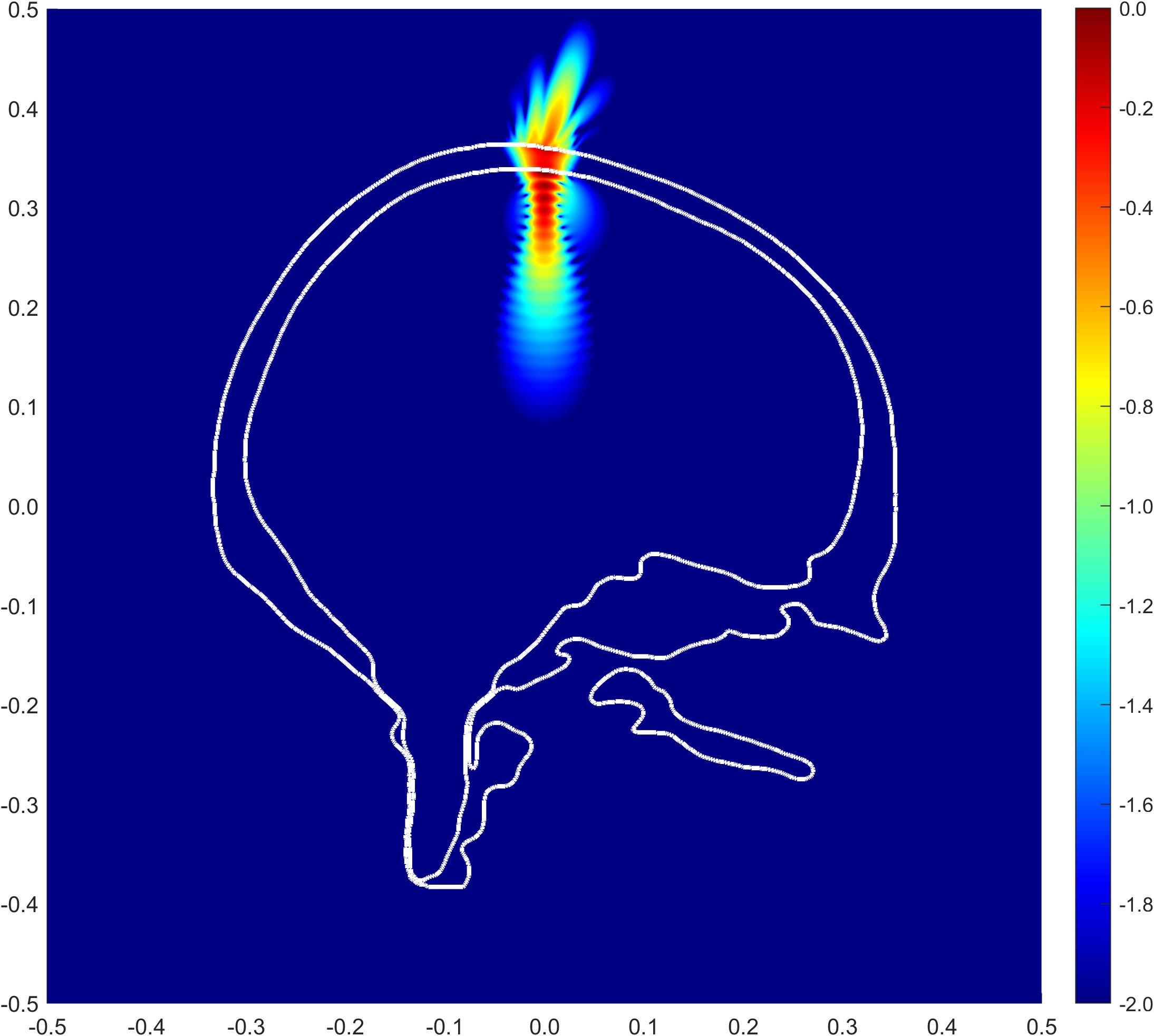}
\label{subfig:Ampl2_100}
}
\subfloat[Residuals for $\omega = 400 \pi$]{
\includegraphics[height=0.38 \textwidth, trim = 0 0 0 0, clip]{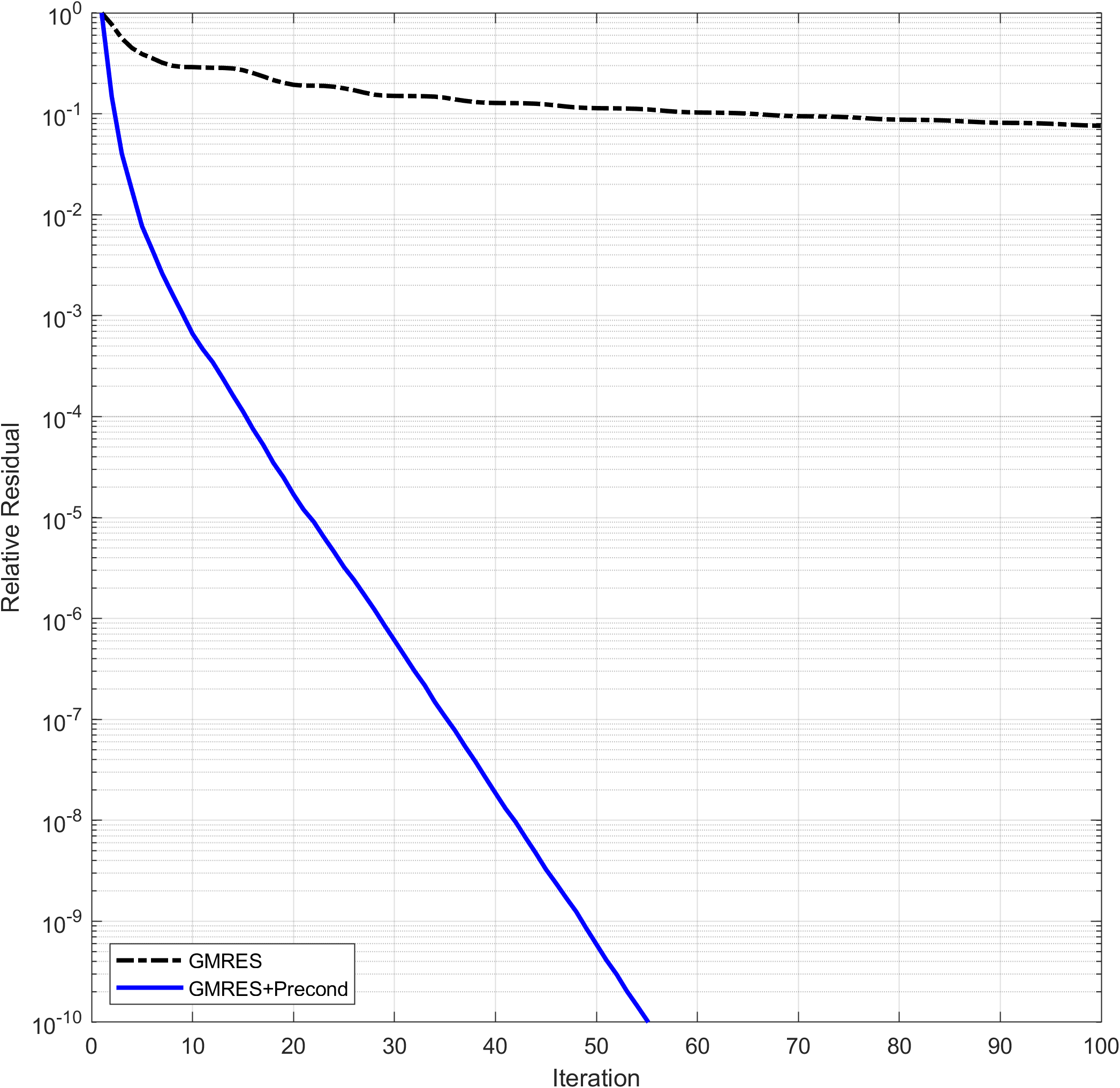}
\label{subfig:GMRES2_100}
} \\
\subfloat[Log-Amplitude for $\omega = 600 \pi$]{
\includegraphics[height=0.38 \textwidth, trim = 0 0 0 0, clip]{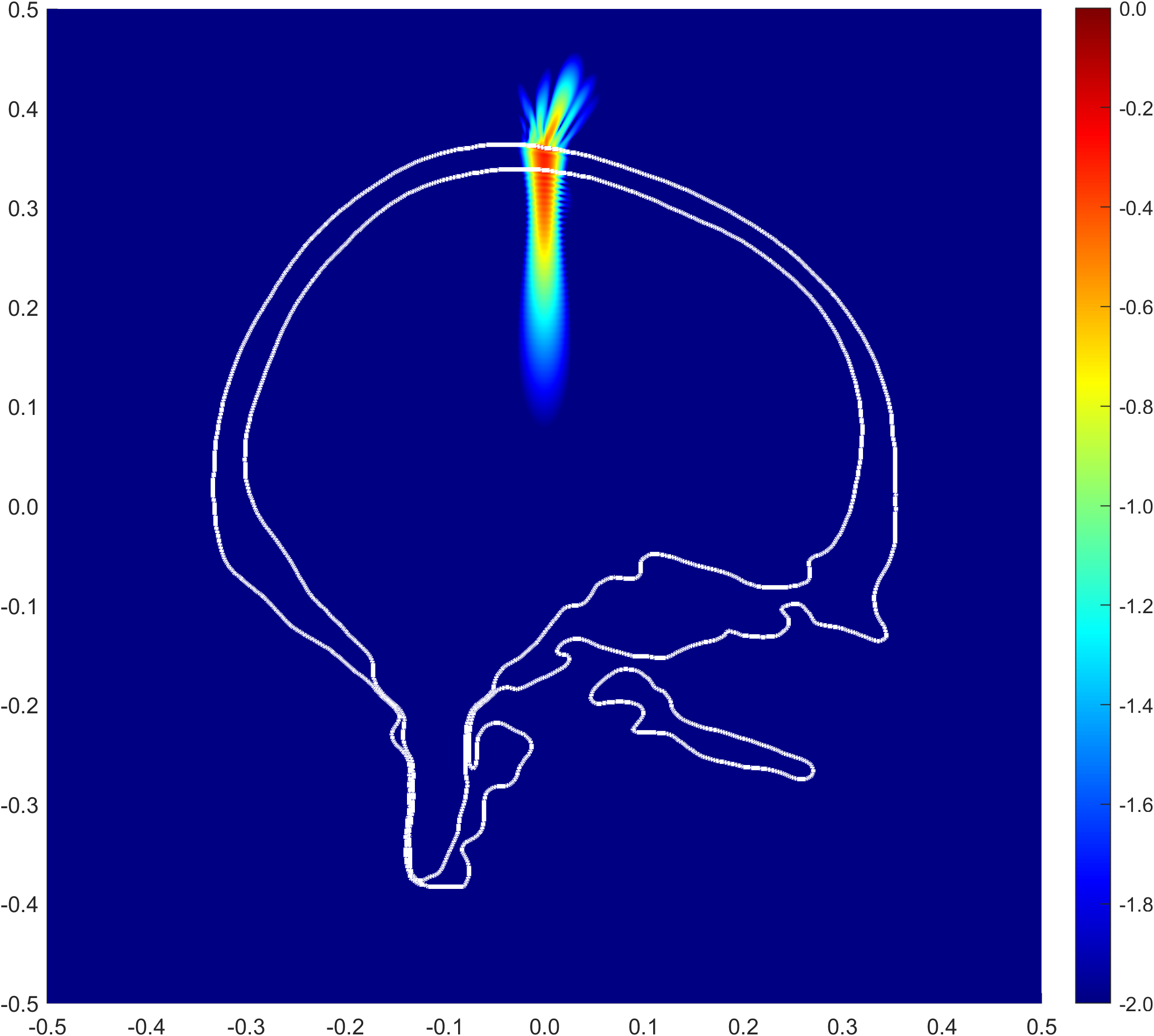}
\label{subfig:Ampl2_200}
}
\subfloat[Residuals for $\omega = 600 \pi$]{
\includegraphics[height=0.38 \textwidth, trim = 0 0 0 0, clip]{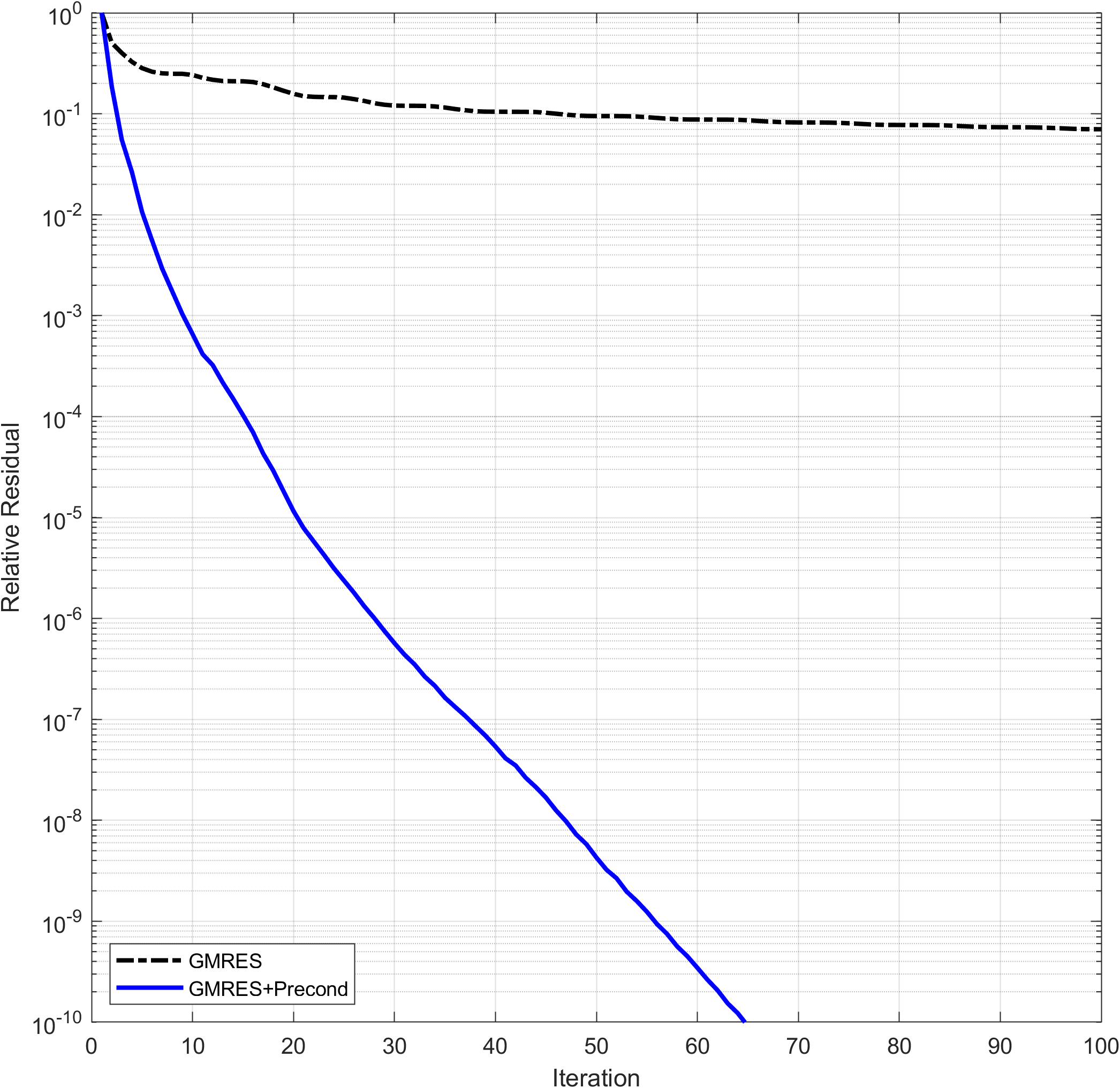}
\label{subfig:GMRES2_200}
} \\
\subfloat[Solution amplitude for $\omega = 800 \pi$ (log-scale)]{
\includegraphics[height=0.38 \textwidth, trim = 0 0 0 0, clip]{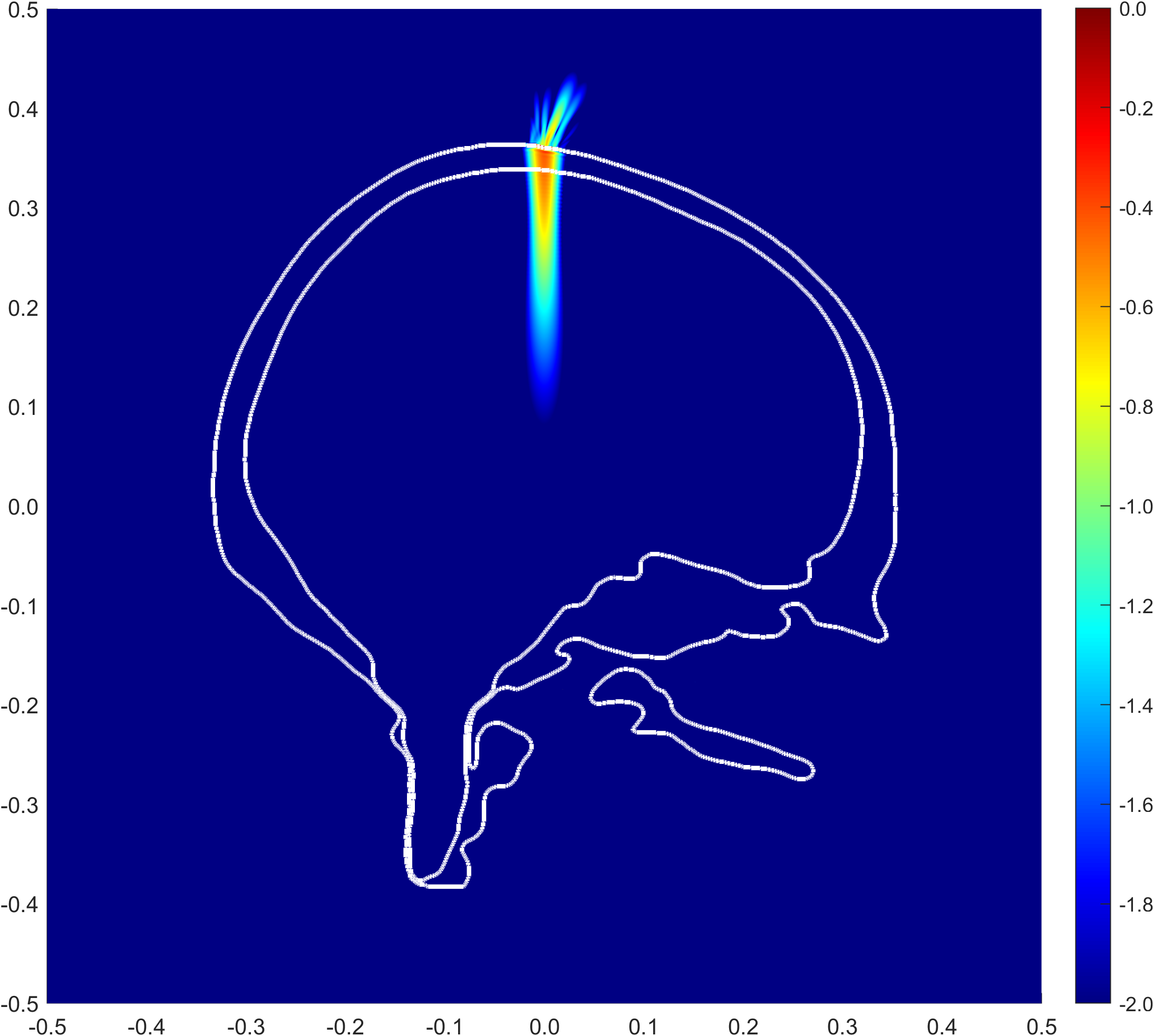}
\label{subfig:Ampl2_300}
}
\subfloat[Residuals for $\omega = 800 \pi$]{
\includegraphics[height=0.38 \textwidth, trim = 0 0 0 0, clip]{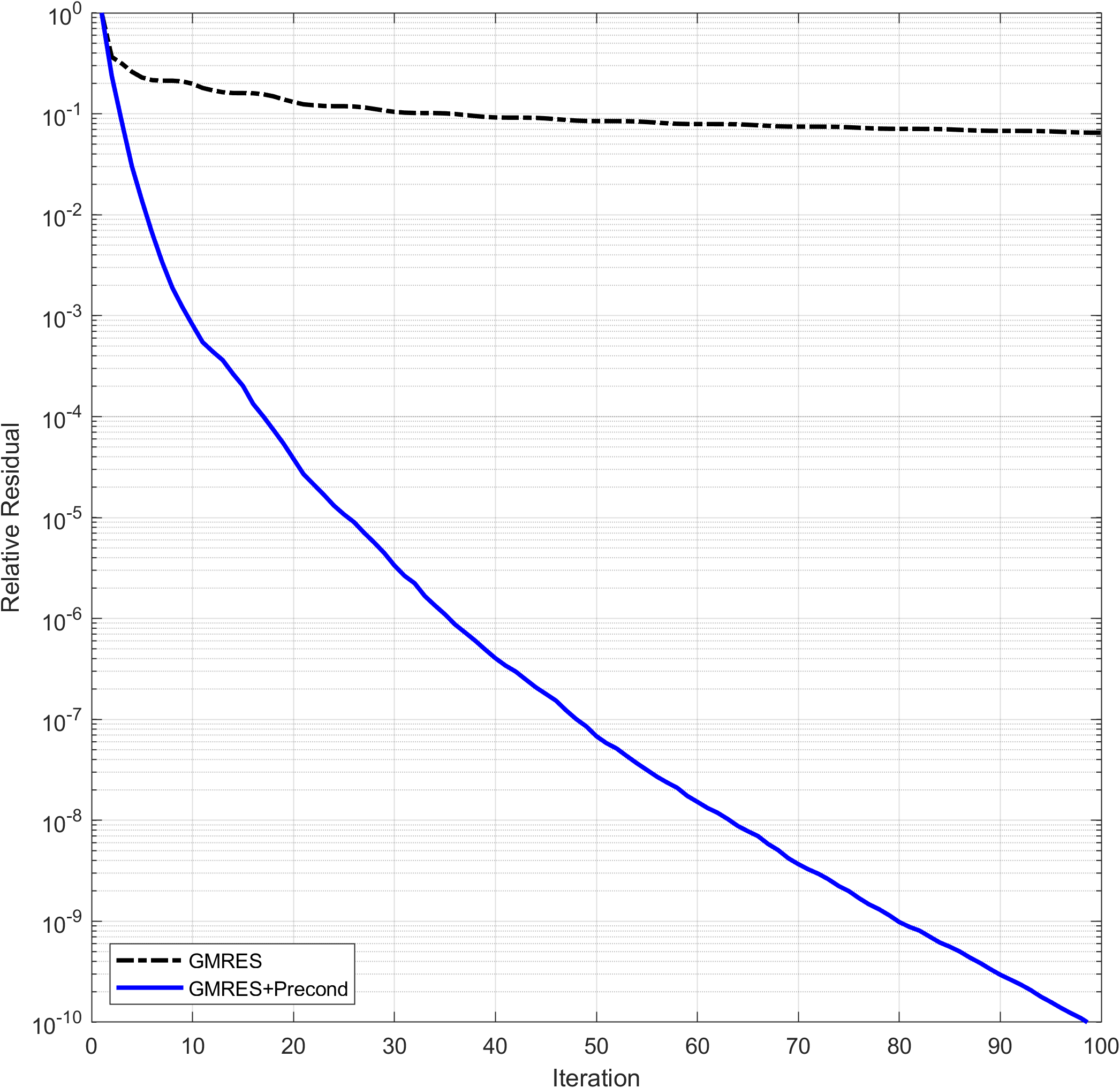}
\label{subfig:GMRES2_300}
}
\caption{Amplitude of the solution $u$ (log-scale) and relative residual for the first 100 iterations of the GMRES method applied to \eqref{Eqn.MainDiscrete} and to \eqref{Eqn.MainDiscretePrecond} for frequencies $\omega=400 \pi$, $\omega = 600 \pi$ and $\omega=800 \pi$, respectively.}
\label{Fig.Ex2}
\end{figure}

\bigskip

\bigskip

%%%%%%%%%%%%%%%%%%%%%%%%%%%%%%%%%%%
%%%%%%%%%  NEW SECTION  %%%%%%%%%%%
%%%%%%%%%%%%%%%%%%%%%%%%%%%%%%%%%%%

\section{Conclusion}
\label{Section.Conclusion}

In this paper, we proposed a new pseudodifferential preconditioner for the Helmholtz equation in variable media with absorption. This preconditioner is constructed from the principal symbol $q_{-2}=q_{-2}(x,\xi)$ (see \eqref{Eqn.021}) of a non-classical pseudodifferential expansion for the inverse of the Helmholtz operator. We take advantage of the explicit dependence of this principal symbol on the wave speed $c$ to interpolate it as a function of the wave speed $c$ rather than as a function of the phase-space variables $(x,\xi)$. As a result we obtain an accurate and computationally efficient approximation of this symbol using univariate interpolation theory even when the original wave problem is posed in two- or three-dimensional domains. The overall complexity of the proposed preconditioner is $\mathcal{O}(M N^{d} \log N)$ where $M$ is the number of interpolation points sampling the range of the wave speed, $N^d$ is the DOF, and $d=2,3$ is the dimension of the physical domain. The number $M$ of interpolation points is independent of the DOF, of the frequency $\omega$, and of the dimension $d$. The proposed preconditioner can be implemented as a matrix-free action based on Fourier transforms. The numerical experiments reported here show that the preconditioner can reduce the condition number of the discrete (finite difference) Helmholtz operator by 10 orders of magnitude or more (see Tables \ref{tab:tableFrequency}-\ref{tab:tableSmoothness}) and improve the convergence of the GMRES iteration considerably (see Figures \ref{Fig.Solution_u} and \ref{Fig.Ex2}). The numerical experiments also show that the conditioning properties of the proposed method remain robust with respect to the points per wavelength and to the frequency $\omega$. This latter observation is consistent with the pseudodifferential derivation whose neglected terms decay as the frequency increases. See remarks above Theorem \ref{Thm.Spectral1}. A second set of numerical experiments (see Subsection \ref{Section.Subsection2}) illustrate the ability of the preconditioner to be implemented for wave scattering problems where an absorbing boundary layer/sponge is needed to truncate the physical domain and approximate the effect of the Sommerfeld radiation condition.

Finally we wish to list some limitations of the proposed preconditioning method and directions of possible improvement or extensions.
\begin{enumerate}[leftmargin=2em]
    \item In general, the proposed pseudodifferential preconditioner is limited by its underlying technical assumptions, including the smoothness of the wave speed and other media properties. Although the principal symbol \eqref{Eqn.021} does not contain derivatives of the acoustic properties, the next symbol does. In fact, we have
    observed a deterioration in the effectiveness of the preconditioner in Table \ref{tab:tableSmoothness} as the transition in wave speed values becomes sharper.
    
    \item The domain of interest was discretized using a uniform Cartesian grid which is ideal for discretizing the Helmholtz operator using finite differences and for the implementation of the FFT for the application of the preconditioner. It would be interesting to investigate how to implement this preconditioner on non-Cartesian grids which are typically employed for finite element methods. 
    
    \item The speed of the proposed preconditioner relies on the application of the FFT. Unfortunately, FFT algorithms are not very well-suited for parallelization and distributed memory architectures. Hence, the FFT may prevent the proposed preconditioner from being fully parallelized.
    
    \item Assumption \eqref{Eqn.Intro02}, embedded in \eqref{Eqn.101}, may be too restrictive in some interesting cases. For instance, the wave speed $c=c(x)$ may be completely independent from the damping coefficient $a=a(x)$. In such cases, the interpolation method described in Section \ref{Section.EvalAlgo} should be extended to a bi-variate setting at the expense of increasing the number of interpolation points to sample the set $[c_{\rm min} , c_{\rm max}] \times [a_{\rm min} , a_{\rm max}]$. However, in many applications, such as ultrasonics in soft biological tissues, there is a one-to-one relationship between the wave speed and the attenuation coefficient of various tissues \cite{Varslot2005, Verweij2014}. Hence, the coefficient $a$ can be expressed as a function of the wave speed $c$. 

    \item In this paper, we have incorporated the presence of an absorbing layer/sponge defined by a complex-valued wave speed (see Subsections \ref{Subsec.AbsLayer} and \ref{Section.Subsection2}). However, perfectly matched layers (PML) have been shown to absorb the outgoing waves better. Hence, it would be interesting to extend the preconditioner to handle PMLs. 

    \item We have implemented the interpolation scheme using piecewise linear polynomials. Other interpolation bases such as piecewise Lagrange or Hermite polynomials could be implemented to improve the accuracy estimate of Theorem \ref{Thm.Acc}. In particular, Hermite interpolation may bring about interesting challenges as the derivative of a symbol with respect to the wave speed will introduce new symbols into the formulation.

    \item It may be possible to design more accurate preconditioners by including more terms from the expansion \eqref{Eqn.020}-\eqref{Eqn.025}. However, we anticipate some challenges. First, higher derivatives of the media properties (wave speed and damping coefficient) would need to be discretized. Second, a simple look at \eqref{Eqn.023} reveals that the symbol $q_{-3}$ represents a third order derivative in its numerator. This factor is eventually dominated by the denominator for very high frequencies $|\xi| \gg \omega/c$. But for frequencies in the range $0 \lesssim |\xi| \lesssim \omega / c$, we can expect the naive application of $q_{-3}$ to behave unstably. Therefore, regularization or appropriate filters may be needed.
    
    \item The Helmholtz equation is one of the simplest models for time-harmonic wave propagation. We believe that the work developed here could be extended to other models such as the Helmholtz equation is divergence form, Maxwell equations of electromagnetism, Navier equations of linear elasticity, and also to integral formulations such as the Lippmann-Schwinger equation.
\end{enumerate}

%%%%%%%%%%%%%%%%%%%%%%%%%%%%%%%%%%%
%%%%%%%%%  NEW SECTION  %%%%%%%%%%%
%%%%%%%%%%%%%%%%%%%%%%%%%%%%%%%%%%%
\bigskip

\section*{Acknowledgment}
The work of S. Acosta and T. Khajah was partially supported by NIH award 1R15EB035359-01A1. The work of B. Palacios was partially supported by Agencia Nacional de Investigaci\'on y Desarrollo (ANID) de Chile, Grant FONDECYT Iniciaci\'on N$^\circ$11220772.
S. Acosta would like to thank the support and research-oriented environment provided by Texas Children's Hospital. The authors would like to thank the anonymous journal referees for their constructive reviews.

\bigskip
%\clearpage

\bibliographystyle{plain}
\bibliography{library}

%\bibliographystyle{alpha}
%\bibliography{sample}

\end{document}